\setlist{noitemsep,nolistsep,leftmargin=1.7em}
\DeclareFontFamily{U}{mathx}{\hyphenchar\font45}
\DeclareFontShape{U}{mathx}{m}{n}{
      <5> <6> <7> <8> <9> <10>
      <10.95> <12> <14.4> <17.28> <20.74> <24.88>
      mathx10
      }{}
\DeclareSymbolFont{mathx}{U}{mathx}{m}{n}
\DeclareMathSymbol{\bigtimes}{1}{mathx}{"91}
\def\emptyset{{\centernot\ocircle}}
\definecolor{darkred}{rgb}{0.7,0.1,0.1}
\definecolor{darkblue}{rgb}{0.1,0.1,0.8}
\definecolor{darkgreen}{rgb}{0.1,0.7,0.1}
\providecommand{\figures}{false}
{ \ifthenelse{\equal{\figures}{false}} {#1}{\[ {\rm Figure \ missing !} \]} }{}
\def\CP{\mathcal{P}}
\def\CM{\mathcal{M}}
\def\${|\!|\!|}
\newcommand*\bigcdot{\mathpalette\bigcdot@{.5}}
\newcommand*\bigcdot@[2]{\mathbin{\vcenter{\hbox{\scalebox{#2}{$\m@th#1\bullet$}}}}}
\def\CF{\mathcal{F}}
\newenvironment{DIFnomarkup}{}{} 
\newtheorem{example}[lemma]{Example}
\newtheorem{algorithm}[lemma]{Algorithm}
\newfont{\indic}{bbmss12}
\def\mail#1{\burlalt{#1}{mailto:#1}}
\colorlet{symbols}{blue!90!black}
\colorlet{testcolor}{green!60!black}
\colorlet{connection}{red!30!black}
\def\symbol#1{\textcolor{symbols}{#1}}
\def\symbol#1{\textcolor{symbols}{#1}}
\tikzset{
root/.style={circle,fill=black!50,inner sep=0pt, minimum size=3mm},
        circ/.style={circle,fill=white,draw=black,very thin,inner sep=.5pt, minimum size=1.2mm},
        dot/.style={circle,fill=black,inner sep=0pt, minimum size=1.2mm},
        dotred/.style={circle,fill=black!50,inner sep=0pt, minimum size=2mm},
        var/.style={circle,fill=black!10,draw=black,inner sep=0pt, minimum size=3mm},
        kernel/.style={semithick,shorten >=2pt,shorten <=2pt},
        kernel1/.style={thick},
        kernels/.style={snake=zigzag,shorten >=2pt,shorten <=2pt,segment amplitude=1pt,segment length=4pt,line before snake=2pt,line after snake=5pt,},
		kernels1/.style={snake=zigzag,segment amplitude=0.5pt,segment length=2pt},
		rho1/.style={densely dotted,semithick},
        rho/.style={densely dashed,semithick,shorten >=2pt,shorten <=2pt},
           testfcn/.style={dotted,semithick,shorten >=2pt,shorten <=2pt},
        renorm/.style={shape=circle,fill=white,inner sep=1pt},
        labl/.style={shape=rectangle,fill=white,inner sep=1pt},
        xic/.style={very thin,circle,fill=symbols,draw=black,inner sep=0pt,minimum size=1.2mm},
        xi/.style={very thin,circle,fill=blue!10,draw=black,inner sep=0pt,minimum size=1.2mm},
	xib/.style={very thin,circle,fill=blue!10,draw=black,inner sep=0pt,minimum size=1.6mm},
	xie/.style={very thin,circle,fill=green!50!black,draw=black,inner sep=0pt,minimum size=1mm},
	xid/.style={very thin,circle,fill=symbols,draw=black,inner sep=0pt,minimum size=1.6mm},
	edgetype/.style={very thin,circle,draw=black,inner sep=0pt,minimum size=5mm},
	nodetype/.style={very thick,circle,draw=black,inner sep=0pt,minimum size=5mm},
	kernels2/.style={very thick,draw=connection,segment length=12pt},
clean/.style={thin,circle,fill=black,inner sep=0pt,minimum size=1mm},	not/.style={thin,circle,fill=symbols,draw=connection,fill=connection,inner sep=0pt,minimum size=0.8mm},
	>=stealth,
        }
\tikzset{ individus/.style={scale=0.40,draw,circle,thick,fill=black!10},
 individu/.style={scale=0.40,draw,circle,thick,fill=black!50},       } 
\def\DeclareSymbol#1#2#3{\expandafter\gdef\csname MH@symb@#1\endcsname{\tikz[baseline=#2,scale=0.15,draw=symbols]{#3}}\expandafter\gdef\csname MH@symb@#1s\endcsname{\scalebox{0.7}{\tikz[baseline=#2,scale=0.15,draw=symbols]{#3}}}}
\def\<#1>{\csname MH@symb@#1\endcsname}
 \def\1{\mathbf{\symbol{1}}}
\DeclareMathAlphabet{\mathpzc}{OT1}{pzc}{m}{it}
\def\simnot{\stackrel{\vbox to 0.15em{\hbox{\kern0.07em$^\circ$}}}{\sim}}
\begin{document}

\title{Multi-indice $B$-series }

\author{Yvain Bruned$^1$, Kurusch Ebrahimi-Fard$^2$,  Yingtong Hou$^1$,}
\institute{ 
 IECL (UMR 7502), Université de Lorraine
 \and Norwegian University of Science and Technology NTNU \\
Email:\ \begin{minipage}[t]{\linewidth}
\mail{yvain.bruned@univ-lorraine.fr},
\\
\mail{kurusch.ebrahimi-fard@ntnu.no}
\\
\mail{yingtong.hou@univ-lorraine.fr}.
\end{minipage}}

\maketitle

\begin{abstract}
We propose a novel way to study numerical methods for ordinary differential equations  in one dimension via the notion of multi-indice. The main idea is to replace rooted trees in Butcher's $B$-series by multi-indices. The latter were introduced recently in the context of describing solutions of singular stochastic partial differential equations. The combinatorial shift away from rooted trees allows for a compressed description of numerical schemes. Furthermore, such multi-indices $B$-series uniquely characterize the Taylor expansion of one-dimensional local and affine equivariant maps.
\\[.4em]
\noindent {\scriptsize\textit{MSC classification:} 60L70, 65L06,
16T05}.
\end{abstract}

\setcounter{tocdepth}{2}
\tableofcontents


\section{Introduction}
\label{sec:intro}

Classical $B$-series, also known as Butcher series, play a pivotal role in the analysis of numerical integrators for ordinary differential equations (ODEs). Butcher \cite{Butcher72} initially introduced these series as a vital component of an algebraic framework for integration methods. For a contemporary exposition, one may refer to Butcher \cite{Butcher21}, a modern textbook on the subject, or consult \cite{MMMV17} for a succinct historical overview. Butcher series are built upon the remarkable 1-1 correspondence discovered by Cayley \cite{Cayley1857} between non-planar rooted trees and specific vector fields, termed elementary differentials.

Starting from a general initial value problem 
\begin{equation}
\label{eq:ivp}
	y'=f(y), \quad y(0)=y_0,
\end{equation}
we follow the standard reference \cite{HLW2006} and say that a numerical method
\begin{equation}
\label{eq:numericalmethod}
	y_{k+1}=\Phi(h,f)(y_k)
\end{equation}
of step-size $h$ is a $B$-series method if its Taylor expansion can be expressed as a linear combination of elementary differentials
\begin{equation}
\label{eq:Bseries}
	B_{T}(\alpha, h,f,y) 
	=\sum_{\tau \in T} \frac{h^{|\tau|}\alpha(\tau)}{\sigma(\tau)} F_{f}[\tau](y).
\end{equation}
The sum on the right-hand side runs over the elements from the set $T $ of non-planar rooted trees, i.e., connected and simply connected graphs $\tau \in T$ with vertex set $V(\tau)$ and edge set $E(\tau)$, and a distinguished vertex called the root. Vertices other than the root have exactly one outgoing edge and an arbitrary number of incoming ones; All edges are oriented towards the root vertex. A leaf of a tree is a vertex without any incoming edges. 
The degree $|\tau|$ of a tree $\tau \in T$ is defined in terms of the cardinality of $V(\tau)$. We include the notion of empty tree $e$, which has degree zero, $|e|=0$. The space spanned by all trees $T = \bigcup_{n \ge 0} T_n$, where $T_n$ denotes the set of all trees of degree $n$,  is denoted  $\langle T \rangle$. A rooted tree $\tau$ can be described in terms of a set of rooted trees $\tau_1,\ldots,\tau_n$ and the $B_+$-map which adds a new common root such that $\tau=B_+(\tau_1,\ldots,\tau_n)$ and $|\tau|=1+|\tau_1| + \cdots +|\tau_n|$. A rooted forest is a finite set of rooted trees written as, for example, $\tau_1\ldots\tau_n$. The linear span of the forests is denoted by $ \CF $.    The symmetry factor a rooted tree $\tau \in T$, $\sigma(\tau)$, is defined recursively for $\tau=B_+(\tau_1^{r_1},\ldots,\tau_m^{r_m})$ by
	\begin{equs}
		\sigma(\tau) = \prod_{i}^{m} r_i! \sigma(\tau_i)^{r_i}
	\end{equs}
	where $\tau_1,\ldots,\tau_m$ are distinct and $\tau_i^{r_i}$ means $\tau_i$ appears $r_i$ times. The symmetry factor for a forest is
	\begin{equs}
		\sigma(\tau_1^{r_1}\ldots\tau_m^{r_m}) = \prod_{i}^{m} r_i! \sigma(\tau_i)^{r_i}
	\end{equs} 
	for distinct $\tau_1,\ldots,\tau_m$.
	In the $B$-series, $\alpha: \CF \mapsto \mathbb{R}$ is a linear map from forests to real numbers and it preserves the forest product in the sense that for any $ n \in \mathbb{N}_{+}$
	\begin{equs}
		\alpha(\tau_1\ldots\tau_n) = \prod_{i=1}^n \alpha(\tau_i).
	\end{equs}
	The $B$-series~\eqref{eq:Bseries} describes the exact solution of the ODE~\eqref{eq:ivp} if $\alpha(\tau) = (\tau!) ^{-1}$ (see \cite[Sec.~3.1]{MR2657947}), where the tree factorial $\tau!$ is computed inductively for $\tau=B_+(\tau_1,\dots,\tau_n) \in T$ by defining $\tau! := |\tau| \tau_1 ! \cdots \tau_n !$.  
	The tree factorial of a forest is the product of the factorials of the constituting trees. For the empty tree we define $e !:=1$.

In general, $f : \mathbb{R}^n \to \mathbb{R}^n$ in \eqref{eq:ivp} is a smooth vector field and the elementary differential $F_f$ associates vector fields to trees in an inductive way \cite{HLW2006}, i.e., $F_f[B_+(e)]:=f$ and for a tree $t=B_+(t_1,\ldots,t_n)$ 
\begin{equation} 
\label{eq:elementarydiff} 
	F_f[t]:= f^{(n)}(F_f[t_1],\ldots, F_f[t_n]),
\end{equation}
where $f^{(n)}$ denotes the $n$-th derivative of $f$ -- which is a $n$-linear mapping. As examples, we consider 
\begin{equation*} 
	F_f[\Forest{[[]]}]=f^{(1)}(f)
	\qquad
	F_f[\Forest{[[][]]}]=f^{(2)}(f,f)
	\qquad
	F_f[\Forest{[[[]]]}]=f^{(1)}(f^{(1)}(f)).
\end{equation*}
To get the convergence of the series \eqref{eq:Bseries}, same as in \cite[p.57]{HLW2006}, we assume that $f$ is an analytic function and the linear map $\alpha(\tau)$ satisfies $\alpha(\tau) \lesssim C^{|\tau|}$
for some constant $C$. Then \eqref{eq:Bseries} is bounded by a power series of $Ch$. Therefore, if $h$ sufficiently small, it will converge.

The mapping $F_f$, as defined in \eqref{eq:elementarydiff}, which assigns a specific vector field to each tree, can be more precisely characterised as a pre-Lie morphism. This morphism operates from the free pre-Lie algebra (in a single generator), defined on non-planar rooted trees, to vector fields. To elaborate on this, we first revisit the notion of (left) pre-Lie algebra. Throughout the remainder of the paper, we denote by $\mathbb{K}$ (which is specifically $\mathbb{R}$ in this paper) the base field of characteristic zero, over which all algebraic structures  \cite{cartierpatras2021,manchon2011} are defined. 

\begin{definition}
\label{def:preLie}
A (left) pre-Lie algebra $(\mathfrak g,\triangleright)$ consists of a vector space $\mathfrak g$ with bilinear operation $\triangleright: \mathfrak g \times \mathfrak g \to \mathfrak g$ which satisfies for $x,y,z \in \mathfrak g$ the left pre-Lie identity 
\begin{equation} 
\label{eq:preLie1}
	(x \triangleright y) \triangleright z - x \triangleright (y \triangleright z)
		= (y \triangleright x) \triangleright z - y \triangleright (x \triangleright z)	.
\end{equation}
\end{definition}
Most importantly, the next result shows that pre-Lie algebras are Lie admissible.
\begin{proposition} 
\label{prop:prelie}
Let $(\mathfrak g, \triangleright)$ be a left pre-Lie algebra. The commutator bracket
\begin{equation}
\label{Liebracket}
	\llbracket x,y \rrbracket := x \triangleright y - y \triangleright x 
\end{equation}
defines a Lie algebra on $\mathfrak g$, i.e., it satisfies the Jacobi identity for all $x, y \in \mathfrak g$.  
\end{proposition}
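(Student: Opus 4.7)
The plan is to establish the two defining properties of a Lie algebra: antisymmetry of $\llbracket \cdot, \cdot \rrbracket$ and the Jacobi identity.

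Antisymmetry is immediate from the definition \eqref{Liebracket}, since $\llbracket x, y \rrbracket = x \triangleright y - y \triangleright x = -(y \triangleright x - x \triangleright y) = -\llbracket y, x \rrbracket$. So the only substantive task is the Jacobi identity
\[
	\llbracket \llbracket x,y \rrbracket, z \rrbracket + \llbracket \llbracket y,z \rrbracket, x \rrbracket + \llbracket \llbracket z,x \rrbracket, y \rrbracket = 0.
\]

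The idea is to reinterpret the pre-Lie identity \eqref{eq:preLie1} as the statement that the associator
\[
	a(x,y,z) := (x \triangleright y) \triangleright z - x \triangleright (y \triangleright z)
\]
is symmetric in its first two arguments, i.e.\ $a(x,y,z) = a(y,x,z)$. I would then expand $\llbracket \llbracket x,y \rrbracket, z \rrbracket$ into four terms using the definition, and apply the pre-Lie identity to the pair involving $(x \triangleright y)\triangleright z$ and $(y \triangleright x) \triangleright z$ in order to eliminate all left-parenthesised expressions. This rewrites $\llbracket \llbracket x,y \rrbracket, z \rrbracket$ purely in terms of expressions of the form $u \triangleright (v \triangleright w)$.

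Applying the same manipulation to the two cyclic permutations $\llbracket \llbracket y,z \rrbracket, x \rrbracket$ and $\llbracket \llbracket z,x \rrbracket, y \rrbracket$ and summing, one finds that all six terms of the form $u \triangleright (v \triangleright w)$ appear with canceling signs (each ordered triple $(u,v,w)$ arises exactly twice with opposite signs). Hence the cyclic sum vanishes and the Jacobi identity follows.

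There is no real obstacle here, only careful bookkeeping: the crucial observation, which should be highlighted as the conceptual content of the argument, is that the pre-Lie axiom is exactly what is needed to transform the left-associated products appearing in $\llbracket \llbracket x,y \rrbracket, z\rrbracket$ into right-associated ones, after which the cyclic cancellation becomes manifest.
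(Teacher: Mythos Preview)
Your argument is correct and is exactly the standard proof that pre-Lie algebras are Lie admissible. The paper itself does not supply a proof of Proposition~\ref{prop:prelie}; it is stated as a well-known background fact, so there is no paper proof to compare against.
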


\begin{example}
\label{ex:euclidean}
A prominent example of pre-Lie algebras emerges when examining manifolds that bear a flat and torsion free (linear) connection. 
Remember that a linear connection on a manifold $M$ can be defined on the Lie algebra of its vector fields $\mathfrak X_M$, as a bilinear mapping, that is, $\nabla:\mathfrak X_M \times \mathfrak X_M \rightarrow \mathfrak X_M$, such that $\nabla_{fX}Y=f \nabla_X Y$ and $\nabla_X(fY) = \langle df,X\rangle Y + f \nabla_X Y$, for all $X,Y \in \mathfrak X_M$ and $f \in C^\infty(M)$. Torsion $\mathrm{T}_\nabla$ and curvature $\mathrm{R}_\nabla$ associated to $\nabla$ are tensors defined by 
\begin{equation*}
	\mathrm{T}_\nabla(X,Y)
		:=\nabla_XY-\nabla_YX-[X,Y] \in\mathfrak X_M
\end{equation*}
and 
\begin{equation*}
	\mathrm{R}_\nabla(X,Y)
		:=\nabla_X\nabla_Y-\nabla_Y\nabla_X-\nabla_{[X,Y]} \in\operatorname{End}(TM)
\end{equation*}
for all $X,Y\in\mathfrak X_M$. A linear connection $\nabla$ is \emph{torsion free} if $\mathrm{T}_\nabla(X,Y)=0$ for all $X,Y \in \mathfrak X_M$. If $\mathrm{R}_\nabla(X,Y)=0$, for all $X,Y \in \mathfrak X_M$, then it is called \emph{flat} (or with $0$-curvature). If $\nabla$ is a (linear) torsion-free and flat connection on $M$, then the product $\cdot:\mathfrak X_M \otimes\mathfrak X_M \rightarrow \mathfrak X_M$ defined by $X \cdot Y=\nabla_XY$ is pre-Lie. 
In the sequel, we use the following short hand notation $ \mathfrak X = \mathfrak X_{\mathbb{R}} $.
\end{example}
Another, very related example is a pre-Lie algebra defined on rooted trees. The pre-Lie product is given by grafting on trees, i.e., for two trees $\tau_1,\tau_2 \in T$
\begin{equation*}
	\tau_1 \curvearrowright \tau_2 := \sum_{v \in V(t_2)} \tau_1 \curvearrowright_v \tau_2 \in  \langle T \rangle,
\end{equation*}
where the operation $\tau_1 \curvearrowright_v \tau_2$ is defined by grafting the root of $\tau_1$ via a new edge to the vertex $v \in V(\tau_2)$. For example
\begin{equation*}
	\Forest{[]} \curvearrowright \Forest{[]} = \Forest{[[]]} 
	\qquad
	\Forest{[]} \curvearrowright \Forest{[[]]} = \Forest{[[[]]]}  + \Forest{[[][]]}  
	\qquad
	\Forest{[[]]} \curvearrowright \Forest{[[]]} = \Forest{[[[[]]]]} + \Forest{[[[]][]]}. 
\end{equation*}
Interestingly, Chapoton and Livernet \cite{ChapLiv2001} showed that $\mathscr{P}(\Forest{[]}\!):=( \langle T \rangle,\curvearrowright)$ is the free pre-Lie algebra in one generator.
Going back to the above Example \ref{ex:euclidean} by considering $M=\mathbb{R}^n$ together with its common flat connection, we see that for vector fields over $\mathbb{R}^n$ 
\begin{equation*}
	f(x):=\sum_{i=1}^n f^i(x) \frac{\partial}{\partial x^i},
\end{equation*}
it is easy to verify that the binary product
\begin{equation*}
		(f \triangleright g) (x) =\sum_{i=1}^n\Big(\sum_{j=1}^n 
	f^j(x)\frac{\partial}{\partial x^j}g^i(x)\Big)\frac{\partial}{\partial x^i}
	=g^{(1)}(f)(x)
\end{equation*}
satisfies the left pre-Lie relation and $F_f$ becomes a pre-Lie morphism
\begin{equation*} 
	F_f[t_1 \curvearrowright t_2 ] = F_f[t_1] \triangleright  F_f[t_2 ].
\end{equation*}

	Conjectured by Dominique Manchon, the product defined in  (\ref{eq:monomialgrafting}) of multi-indices which plays a similar role as the pre-Lie product (grafting) of rooted trees is a Novikov product. It has been made explicit in the second arXiv version of \cite[Lem.~3.5]{Li23} as well as in \cite{BD23}. We therefore recall the notion of Novikov algebra $(N,  \triangleright )$ which is by definition a left pre-Lie algebra and is moreover right-commutative \cite{DL2002}
\begin{equation}
\label{eq:rcom}
	   (x \triangleright y) \triangleright z   = (x \triangleright z)\triangleright y.
\end{equation}

\medskip

Examining a $B$-series \eqref{eq:Bseries} alongside a numerical method \eqref{eq:numericalmethod}, it naturally prompts the question: what specific properties define the latter to possess a $B$-series expansion? This question has persisted as a long-standing problem, only recently resolved in the groundbreaking work by McLachlan et al. \cite{MMMV16} (see also \cite{MMMV17}):

\begin{quote}
``{\it{Numerical methods that can be expanded in $B$-series are defined in all dimensions, so they correspond to sequences of maps—one map for each dimension. A long-standing problem has been to characterise those sequences of maps that arise from $B$-series. This problem is solved here: we prove that a sequence of smooth maps between vector fields on affine spaces has a $B$-series expansion if and only if it is affine equivariant, meaning it respects all affine maps between affine spaces.}}"~\cite{MMMV16}.
\end{quote}

The work by Munthe-Kaas and Verdier \cite{MV16} represents a crucial step in the development of the general result mentioned above. They systematically characterised all local and affine equivariant methods, dubbing them aromatic $B$-series methods, which are essentially generalised $B$-series defined over aromatic trees.  Notably, it was demonstrated \cite[Corollary 8.4]{MV16} that every local and affine equivariant 1-dimensional numerical method inherently possesses a Taylor expansion, taking the form of a proper $B$-series.

\medskip

We now introduce one of the primary objects of this paper: multi-indice $B$-series. They are defined similarly to classical $B$-series, but instead of the index set of rooted trees, $T$, we employ populated multi-indices, $\mathbf{M}$, as the index set that retains only the count of nodes and their arity. Multi-indice $B$-series are of the form
\begin{equs}
	B(a,h,f,y) = a(\emptyset) y + \sum_{z^\beta \in\mathbf{M}} 	
	\frac{h^{|z^\beta|} a(z^\beta)}{S(z^\beta)}F_f[z^\beta](y),
\end{equs}
where $\emptyset$ is the empty forest of populated multi-indices.
See \eqref{multi_indices_B_series} below, where all the notations are properly introduced. 

As with classical $B$-series \cite{CHV05,CHV}, one can define the operations of composition ($\circ$) and substitution ($\circ_s$) for these multi-indice $B$-series. One of our key findings is to show a connection between these analytical operations and two products on multi-indices: $ \star_2 $, the equivalent for multi-indices of the Guin--Oudom (pre-Grossman--Larson) product dual to the Butcher--Connes--Kreimer coproduct \cite{Butcher72,CK,CKI} and $ \star_1 $, the equivalent for multi-indices of the dual of the extraction-contraction coproduct introduced in \cite{CA}.
Specifically, for characters $a$ and $b$ with $a(\emptyset)=b(\emptyset)=1$, and linear map $d(\emptyset) = 0$ with $(c\star_1 d) (\emptyset) = c(\emptyset)$, one has for every $f,g \in \mathcal{C}^{\infty}(\mathbb{R},\mathbb{R})$
	\begin{equs} \label{composition_B}
		B(a,h,f,\cdot)\circ B(b,h,f,y) &= B(b\star_2a,h,f,y),
		\end{equs}
	\begin{equs} \label{substitution_B}
		B(c,h,f,y)	\circ_s B(d,h,g,y) &= B(d\star_1 c, h,g,y).
	\end{equs}
 Here a character is a homomorphism from the forest of multi-indices (defined in  \eqref{forest_product}) to real numbers which preserves the forest product of multi-indices and it send the identity of the forest product (the empty forest $\emptyset$) to the identity of real number multiplication (real number $1$).
This composition \eqref{composition_B} is established in Theorem \ref{thm:main_theorem_composition} and the substitution \eqref{substitution_B} in Theorem \ref{main_theorem_substitution}. The proofs and the algebraic structures involved are inspired by those coming from the rooted trees approach (see \cite{BCCH,BM22,BB21,Li23,B23}).

The concept of ``composition" in \cite[Definition 5.3.]{MV16} (not to be confused with the composition of $B$-series referred to above), is intricately connected to the notion of multi-indice. We are able to refine the main statement in \cite{MV16} by
\begin{theorem} \label{affine_equivariant_multi_indices_intro}
If a map from $\mathcal{C}^{\infty}(\mathfrak X, \mathfrak X)$  with $ \mathfrak X = \mathcal{C}^{\infty}(\mathbb{R}, \mathbb{R})$ is local and affine equivariant, then its Taylor development is a multi-indice $B$-series. Moreover, the choice of the multi-indice $B$-series is unique.
\end{theorem}
The proof of this Theorem can be found in page \pageref{Proof_Theorem_1_5} of this work.

Let us emphasize the significance of this statement. Singular stochastic partial differential equations (SPDEs) are systematically solved through the theory of regularity structures pioneered by Martin Hairer in \cite{reg}. A black box \cite{reg,BHZ,BCCH,ajay} relying on decorated rooted trees, introduced in \cite{BHZ}, covers a large class of these equations (see \cite{FrizHai,BaiHos} for surveys on this topic).
Recently, an alternative combinatorial approach has been proposed \cite{OSSW}, which replaces decorated trees with multi-indices. The concept aims to offer more concise expansions of solutions for these singular dynamics by organizing the expansion according to elementary differentials. Since then, one has seen a fast-growing literature exploring applications as well as the algebraic structures of multi-indices in \cite{BK23,LOTT,T,JZ,BL23,GT}.
For comprehensive surveys on multi-indices, interested readers can refer to \cite{LO23,OST}. 

However, one may wonder whether \textit{multi-indices appeared in the context of numerical analysis prior to their utilization akin to rooted trees in $B$-series, as seen in the work of \cite{OSSW}?} The connection with the work \cite{MV16} gives a positive answer to this question and Theorem \ref{affine_equivariant_multi_indices_intro} shows  that multi-indices are the most natural object for methods in one dimension. This illustrates the importance of multi-indices for both numerical analysis and singular SPDEs. As a final remark, we mention that this could also have implications for low regularity schemes for dispersive equations addressed in \cite{BS}, where decorated trees similar to \cite{BHZ} are used.

\medskip

Let us outline the paper by summarizing the content of its sections. In Section~\ref{sec:multiind}, we begin by revisiting the definition of multi-indices and introduce the pivotal concept of populated multi-indices. These populated multi-indices are endowed with a natural derivation stemming from the free Novikov algebra. From this derivation, one is able to get a product $ \star_2 $ in Definition \ref{def:grafting of multi-indicies} that will be crucially used in the sequel. Notably, this product is equivalent to the Guin--Oudom product for trees. Subsequently, in Algorithm \ref{algorithm:build_tree}  and Corollary \ref{corolla_decomposition}, we unveil a significant property that furnishes a corolla-type decomposition, albeit not uniquely, for populated multi-indices akin to the one observed for trees. Following this crucial observation, we introduce the notion of multi-indice $B$-series, where trees are substituted by populated multi-indices. One of the driving motivations behind exploring these series stems from Proposition \ref{exact_solution_multi_indices}, where precise definitions of coefficients are provided. These coefficients play a key role in demonstrating their correspondence to the Taylor expansion coefficients of the solution of an ODE.

In Section~\ref{sec::composition}, we delve into the composition of multi-indices $B$-series. Initially, we define such composition with a smooth function through (see \eqref{compostion_smooth}). 
In Proposition~\ref{prop:F_derivatives_multiindices}, we demonstrate a crucial morphism property satisfied by the elementary differentials for this product. This proposition is crucial for proving one of our main results, i.e., Theroem~\ref{thm:main_theorem_composition} which provides a precise description of the composition of two multi-indices $B$-series using the product $ \star_2 $. Its proof follows the strategy of proof given in \cite[Thm.~4.6]{B23} which concerns composition of $B$-series in regularity structures. The proof of \cite[Thm.~4.6]{B23} is an adaptation of ideas stemming from \cite{BB21}.
 
In Section~\ref{sec::substitution}, we start by recalling the definition of substitution of $B$-series in \eqref{substitution_def}. Then, we introduce the analoge on multi-indices of the insertion product following the steps of \cite{Li23}. We denote this product by $  \blacktriangleright $ and it is given in Definition \ref{def:insertion_multi-indices}. This definition is very much inspired by the one on trees given in \cite[Sec.~3.4]{BM22} which shows that $  \blacktriangleright $ can be obtained from $ \star_2 $. We then introduce the associative product $ \star_1 $ in Definition \ref{def:star_1_def}. It can also be obtained from the Guin--Oudom procedure. Subsequently, we can present the second main result of this paper, Theorem \ref{main_theorem_substitution} which establishes the connection between the substitution of multi-indices $B$-series and the product $\star_1$. Its proof follows the main argument of \cite[Thm.~4.9]{B23}. One pivotal component in the argument comes from Proposition \ref{prop:morphism}, wherein it is essential to establish a morphism property between $ \star_1 $ and $ \star_2 $, akin to the concept of cointeraction observed between the Butcher--Connes--Kreimer coproduct and the extraction-contraction coproduct on tress described in \cite{CA}.

In Section \ref{sec::affine_equivariant_methods}, we show in Proposition \ref{composition_maps} an explicit link between multi-indices $B$-series and the composition maps defined in \cite[Def.~5.3]{MV16}. The latter were used for obtaining a general statement about aromatic $B$-series and affine equivariant methods which yielded a specific statement in dimension one given in Theorem \ref{local_affine_equivariant}. We recall what an affine equivariant method is in Definition \ref{def_affine_equivariant}. The section closes with our third main result, Theorem \ref{affine_equivariant_multi_indices_intro}, which is a refinement of Theorem \ref{local_affine_equivariant}. The proof of Theorem \ref{affine_equivariant_multi_indices_intro} is based on Proposition \ref{composition_map_populated} that says that a composition map associated to an aromatic tree is a populated multi-indice.


\subsection*{Acknowledgements}
{\small
	Y.~B.~gratefully acknowledges funding support from the European Research Council (ERC) through the ERC Starting Grant Low Regularity Dynamics via Decorated Trees (LoRDeT), grant agreement No.\ 101075208.
	Y.~B.~also would like to thank the Centre for Advanced Study (CAS) at The Norwegian Academy of Science
	and Letters in Oslo, for the nice environment offered during a long stay in September 2023 for the
	programme ”Signature for Images” when this work was started.
	K.~E.~F.~is supported by the Research Council of Norway through project 302831 Computational Dynamics and Stochastics on Manifolds (CODYSMA). He also received support from the "Pure Mathematics in Norway", which is a part of the Mathematics Programme of the Trond Mohn Foundation and would like to thank the Centre for Advanced Study in Oslo for hospitality.}


\section{Multi-indices}
\label{sec:multiind}

The concept of multi-indices emerged initially within the context of studying singular stochastic partial differential equations, as introduced in \cite{OSSW}. It entails a collection of abstract variables $(z_k)_{k \in \mathbb{N}}$. Under a certain assumption (see below), each variable $z_k$ can be seen as corresponding to nodes within a rooted tree possessing precisely $k$ children. Multi-indices, denoted by $\beta : \mathbb{N} \to \mathbb{N}$, encode the occurrence frequency of those variables and thus nodes within a rooted tree. Note that we assume finite support for $\beta$, i.e., $|\{i \in \mathbb{N}\ |\ \beta(i)\neq 0 \}| < \infty$. 

Multi-indices permit a concise representation of rooted trees akin to monomials, capturing the structural aspects of the rooted tree's node distribution.
\begin{equs}
	z^{\beta} : = \prod_{k \in \mathbb{N}} z_k^{\beta(k)}. 
\end{equs}
For instance, we have the following correspondence
\allowdisplaybreaks
\begin{align*}
	\Forest{[[]]} 	&\qquad\  \xleftrightarrow{\;\;\;\;\beta=(1,1)\;\;\;\;\;\;\;\;\;\;\;}  \qquad\ z_0z_1  				\\[0.2cm]
	\Forest{[[[]]]}   	&\qquad\  \xleftrightarrow{\;\;\;\;\beta=(1,2)\;\;\;\;\;\;\;\;\;\;\;}  \qquad\ z_0z^2_1				\\[0.2cm]
	\Forest{[[][]]}  	&\qquad\  \xleftrightarrow{\;\;\;\beta=(2,0,1)\;\;\;\;\;\;\;\;\;\;}  \qquad\ z^2_0z_2				\\[0.2cm]	
	\Forest{[[[[]]]]} 	&\qquad\  \xleftrightarrow{\;\;\;\;\beta=(1,3)\;\;\;\;\;\;\;\;\;\;\;}  \qquad\ z_0z^3_1				\\[0.2cm]
	\Forest{[[[]][]]}  	&\qquad\  \xleftrightarrow{\;\;\;\beta=(2,1,1)\;\;\;\;\;\;\;\;\;\;}  \qquad\ z^2_0z_1z_2				\\[0.2cm]	
	\Forest{[[[][]]]}  	&\qquad\  \xleftrightarrow{\;\;\;\beta=(2,1,1)\;\;\;\;\;\;\;\;\;\;}  \qquad\ z^2_0z_1z_2				\\[0.2cm]	
	\Forest{[[[[]]][[][]][][[]]]}  	&\qquad\  \xleftrightarrow{\;\;\;\beta=(5,3,1,0,1)\;\;\;\;\;}  \qquad\ z^5_0z^3_1z_2z_4	
\end{align*}
From the above examples, it should be clear that different trees can have the same multi-indice.
In the next definition, we define the so-called ``Counting map" from rooted trees to multi-indices which allows to see the explicit correspondence between them.
\begin{definition}(Counting map)
Consider the general form of a tree $\tau =  B_+(\tau_1,\ldots,\tau_n)$, then we have
\begin{equs}
	\Psi(\bullet) = z_0,
	\quad \Psi(\tau) = z_n \prod_{j=1}^n \Psi(\tau_j).
	\end{equs}
\end{definition}
This map simply counts the frequency of nodes with certain number of children in a tree. However, one can notice that the map $\Psi$ is not surjective as not all multi-indices have related trees. Given our aim to exclusively examine multi-indices corresponding to non-planar rooted trees, we shall focus on those fulfilling the so-called "population" condition \cite{LOT}. This condition stipulates that
\begin{equs}
	\label{populated_1}
	[\beta] :=	\sum_{k \in \mathbb{N}} (1 - k)\beta(k)  = |{\beta}| - \sum_{k \in \mathbb{N}} k \beta(k)  = 1,
\end{equs}
where $|{\beta}|$ is the length of the multi-indice ${\beta} $ defined as
\begin{equs}
	|{\beta}| = \sum_{k \in \mathbb{N}} \beta(k).
\end{equs}
From a tree point of view, $ |{\beta}| $ corresponds to the number of nodes and the sum $ \sum_{j \in \mathbb{N}} j \beta(j) $  corresponds to the number of edges. The population condition captures the feature that the number of edges is $1$ less than the number of nodes of a rooted trees. Let us check the condition on some of the above examples.
\begin{align*}
	[(1,1)] 		&= |{(1,1)}|  - 0 - 1 = 1 \\
	[(1,2)] 		&= |{(1,2)}| - 0 - 2 =  1 \\
	[(5,3,1,0,1)] 	&=|{(5,3,1,0,1)}| - 0 - 3 - 2 - 0 - 4 = 1 .
\end{align*}	 
	Note that we will often abuse terminology and call $z^\beta$ a multi-indice. Therefore, $|z^\beta| := |\beta| $ and $[z^\beta] := [\beta]$. For clarity, we also introduce the notation for the set of populated multi-indices defined as
		\begin{equs}
			\mathbf{M}:= \{z^\beta: [z^\beta] = 1  \}.
		\end{equs}

	One can verify that $z_n \prod_{j=1}^n \Psi(\tau_j)$ is populated for any $\tau_j \in T$ simply by induction, which indicates that image of $\Psi$ could possibly be  the set of populated multi-indices. We would give a rigorous proof of why it is exactly the image through the Algorithm~\ref{algorithm:build_tree} introduced later. In order to show that the algorithm is self-contained, we need the following lemma.
	\begin{lemma}
		\label{construction_tree}
		Let $ z^{\beta} $ be a populated multi-indice, we suppose that $ z^{\beta} = z^{\beta_1} z^{\beta_2} $ where $ z^{\beta_1} $ and $ z^{\beta_2} $ are multi-indices with $  z^{\beta_1} = \prod_{i \in I} z_{k_i} $ and $  z^{\beta_2} = \prod_{j \in J} z_{k_j} $, $I$ and $ J $ being finite sets. We also suppose that we have constructed a decorated tree where the inner nodes are decorated by the variable $ z_{k_i} $, $ i \in I $, with the constraint that at most $ k_i $ edges are incoming to this node. Some of the leaves are decorated by $ z_0 $ but others are not. We denote by $m$ the number of the undecorated leaves of such decorated tree. Then, the cardinal $ | z^{\beta_2} | \geq m $ with equality if  $ z^{\beta_2} = (z_0)^n $ ($n=m$).
	\end{lemma}
	
	\begin{proof}
		By the absurd, suppose that
		\begin{equation*}
			| z^{\beta_2} | < m.
		\end{equation*}
		Since $z^{\beta_1}$ is a tree with $m$ undecorated leaves, the number of nodes is $m-1$ less than the number of edges. Therefore, one has
		\begin{equation*}
			[z^{\beta_1}] = - (m-1).
		\end{equation*}
		On the other hand, we know that $ z^{\beta} $ is populated therefore
		\begin{equation*}
			[z^{\beta}] = 1 = [z^{\beta_1}] + [z^{\beta_2}].
		\end{equation*}
		We use the bound  $ [z_{k_j}] \le 1 $ to notice that
		\begin{equation*}
			[z^{\beta_1}] + [z^{\beta_2}] \le -(m-1) + |z^{\beta_2}| < 1
		\end{equation*}
		which gives a contradiction. If $ z^{\beta_2} = (z_0)^n$, one can easily see that the populated condition can be satisfied only if $ m=n $.
	\end{proof}

	Then we introduce the following algorithm that construct corresponding rooted trees from a given populated multi-indices, which can be viewed as a method to find the pre-image of the counting map $\Psi$.
	\begin{algorithm}\label{algorithm:build_tree} 
		We consider one populated multi-indice $z^\beta \in \mathbf{M}$.
		
		\begin{enumerate}
			\item 	 For $z^\beta = z_0$, one just send it to the tree $\bullet$. Otherwise, one can decompose $ z^{\beta} $ into $ z^{\beta} = z_k z^{\beta'} $, for any $k \in \mathbb{N}_{+}$ such that $\beta(k) \ne 0$. Then, the node $ z_k $ can be interpreted as the root of a decorated tree with $ k $ leaves.
			\vspace{0.5em}
			\item  From Lemma \ref{construction_tree} and the fact that $ z^{\beta} $ is populated, one can find at least $k$ variables in $ z^{\beta'} $ to put on the leaves. We can then choose k nodes decorated by variables $ z_{l_1}, \cdots,  z_{l_k}$ to be attached to each incoming edges to the root.
 			\vspace{0.5em}
			\item  Repeating the operation described in Step 2 by choosing $l_j$ nodes decorated by variables $z_{\cdot}$ from the variables not yet put in the tree and connecting them to the node decorated by $l_j$, one can construct a decorated tree with its root decorated by $ z_k $. 
		\end{enumerate}
	\end{algorithm}
One can notice that in order to have a single rooted tree we cannot  choose $ z_{l_1}, \cdots,  z_{l_k}$ to be all $ z_0 $ as long as the number of variables in $z^{\beta'}$ is larger than the number of nodes to be attached.
\begin{proposition} The algorithm \eqref{algorithm:build_tree} terminates and produces a tree.
	\end{proposition}
\begin{proof}
This is a consequence of  Lemma \ref{construction_tree}, the algorithm terminates when the number of variables left equals the number of nodes needed to be attached to the tree and all variables left are $z_0$, which allows to complete the construction of the tree.
\end{proof}

	Then, one can notice that in the decomposition $ z^{\beta} = z_k \prod_{i=1}^k z^{\beta_i} $, the $ z^{\beta_i} $ are the product of the decoration of the nodes of one subtree incoming the root decorated by $ z_k $.
	We can then directly get the following corollary through the algorithm.
	\begin{corollary}\label{corolla_decomposition}
		For any populated multi-indice $z^\beta  \in \mathbf{M}\setminus \{z_0\}$ and any $n \in \mathbb{N}_{+}$ with $\beta(n) \ne 0$, there exists populated multi-indices $z^{\beta_1},\ldots,z^{\beta_n} \in \mathbf{M}$ such that
		\begin{equs} \label{decomposition_multi}
			z^\beta = z_n\prod_{j=1}^nz^{\beta_j}.
		\end{equs}
	\end{corollary}
	\begin{remark}
		One notices that the decomposition \eqref{decomposition_multi} is not unique as there are choices in the construction of the decorated trees described in Algorithm \ref{algorithm:build_tree}.
	\end{remark}
	
	\begin{example} \label{ex:build_tree}
		We illustrate the construction of a decorated tree from a populated multi-indice by using the Algorithm~\ref{algorithm:build_tree}. Consider the populated multi-indice $z^\beta =  z_0^2 z_1 z_2$ corresponding to $ \beta = (2,1,1) $. One can check that it is populated 
		\begin{equation*}
			[\beta] = (1-0) \beta(0) + (1-1)\beta(1) + (1-2) \beta(2) = 1.
		\end{equation*}	
		We start the algorithm by choosing $ z_2 $, we obtain the following decorated tree
		\begin{equs}
			\begin{tikzpicture}[scale=0.2,baseline=-5]
				\coordinate (root) at (0,-1);
				\coordinate (right) at (1,2);
				\coordinate (left) at (-1,2);
				\draw[symbols] (root) -- (right);
				\draw[symbols] (root) -- (left);
				\node[var] (rootnode) at (left) {\tiny{$ $}};
				\node[var] (rootnode) at (right) {\tiny{$  $}};
				\node[var] (rootnode) at (root) {\tiny{$  z_{2} $}};
			\end{tikzpicture}, \quad z_0^2 z_1
		\end{equs}
		with two undecorated leaves.  We use $\bar\beta$ counting the variables that are already used as decorations in the tree we built and $\hat\beta$ storing all variables left, i.e., $\beta = \bar\beta + \hat\beta$. At this point, $ \bar\beta = (0,0,1) $ and $ \hat\beta = (2,1,0) $. Now, we choose in priority two variables different from $ z_0 $ which are $ z_1 $ and $ z_0 $ (there are no other variables). This allows to construct the following decorated trees:
		\begin{equs}
			\begin{tikzpicture}[scale=0.2,baseline=-5]
				\coordinate (root) at (0,-1);
				\coordinate (right) at (1,2);
				\coordinate (leftc) at (-1,5);
				\coordinate (left) at (-1,2);
				\draw[symbols] (root) -- (right);
				\draw[symbols] (root) -- (left);
				\draw[symbols] (left) -- (leftc);
				\node[var] (rootnode) at (leftc) {\tiny{$  $}};
				\node[var] (rootnode) at (left) {\tiny{$ z_1 $}};
				\node[var] (rootnode) at (right) {\tiny{$ z_0 $}};
				\node[var] (rootnode) at (root) {\tiny{$  z_{2} $}};
			\end{tikzpicture}, \quad z_0
		\end{equs}
		with one undecorated leaf. At this point, $ \bar\beta = (1,1,1) $ and $ \hat\beta = (1,0,0) $. We terminate the construction of the decorated tree by picking the last variable $ z_0 $
		\begin{equs}
			\begin{tikzpicture}[scale=0.2,baseline=-5]
				\coordinate (root) at (0,-1);
				\coordinate (right) at (1,2);
				\coordinate (leftc) at (-1,5);
				\coordinate (left) at (-1,2);
				\draw[symbols] (root) -- (right);
				\draw[symbols] (root) -- (left);
				\draw[symbols] (left) -- (leftc);
				\node[var] (rootnode) at (leftc) {\tiny{$ z_0 $}};
				\node[var] (rootnode) at (left) {\tiny{$ z_1 $}};
				\node[var] (rootnode) at (right) {\tiny{$ z_0 $}};
				\node[var] (rootnode) at (root) {\tiny{$  z_{2} $}};
			\end{tikzpicture}, \quad \emptyset
		\end{equs}
		At this point, $ \bar\beta = (2,1,1) $ and $ \hat\beta = \mathbf{0} $. Now, we can write the following decomposition of $z^\beta$: 
		\begin{equs}
			z^\beta = z_2  z^{\beta_1} z^{\beta_2}, \quad \beta_1 = z_1 z_0, \quad \beta_2 = z_0.
		\end{equs}
		The multi-indices $ z^{\beta_1} $ and $ z^{\beta_2} $ correspond to the following subtrees
		\begin{equs}
			z^{\beta_1} \equiv	\begin{tikzpicture}[scale=0.2,baseline=-5]
				\coordinate (root) at (0,-1);
				\coordinate (rootc) at (0,2);
				\draw[symbols] (root) -- (rootc);
				\node[var] (rootnode) at (rootc) {\tiny{$ z_0 $}};
				\node[var] (rootnode) at (root) {\tiny{$ z_1 $}};
			\end{tikzpicture}, 
			\quad z^{\beta_2} \equiv
			\begin{tikzpicture}[scale=0.2,baseline=-5]
				\coordinate (root) at (0,-1);
				\node[var] (rootnode) at (root) {\tiny{$ z_0 $}};
			\end{tikzpicture}.
		\end{equs}
		The non-uniqueness of the construction becomes clear from another possible choice to start the algorithm, i.e., by choosing $z_1$ as the decoration of the root first which would result in another tree compatible with the multi-indice $ \beta = (2,1,1) $.  
		Also, one observes that it is crucial to have the constraint that we cannot choose $ z_{l_1}, \cdots,  z_{l_k}$ to be all $ z_0 $ as long as the number of variables in $z^{\beta'}$ is larger than the number of nodes to be attached" in the second step of Algorithm \ref{algorithm:build_tree}. Indeed, if after the first step, we have chosen twice $ z_0 $, we will have obtained
		\begin{equs}
			\begin{tikzpicture}[scale=0.2,baseline=-5]
				\coordinate (root) at (0,-1);
				\coordinate (right) at (1,2);
				\coordinate (left) at (-1,2);
				\draw[symbols] (root) -- (right);
				\draw[symbols] (root) -- (left);
				\node[var] (rootnode) at (left) {\tiny{$ z_0 $}};
				\node[var] (rootnode) at (right) {\tiny{$ z_0 $}};
				\node[var] (rootnode) at (root) {\tiny{$  z_{2} $}};
			\end{tikzpicture}, \quad  z_1.
		\end{equs}
		The construction of the decorated tree is completed but we have one variable $ z_1 $ left. 
	\end{example}
The Algorithm also leads to the following theorem.
\begin{theorem}
	For every populated multi-indice $z^\beta$, there exists at least one tree $\tau$ such that for each $k \in \mathbb{N}$
	\begin{equs}
		\Psi(\tau) = z^{\beta}.
	\end{equs}
\end{theorem}
\begin{proof}
	This is an immediate consequence of Algorithm \ref{algorithm:build_tree} as for any populated multi-indices we can obtain its associated rooted tree(s) inductively through the algorithm.
\end{proof}
This theorem together with the fact that $z_n \prod_{j=1}^n \Psi(\tau_j)$ is populated for any $T \ni\tau = B_{+}(\tau_1 \cdots \tau_n)$  indicates that populated multi-indices are the image of the counting map $\Psi$.

\vspace{0.5em}	
After introducing multi-indices and their connections to rooted trees we can explore some algebraic structure of them. We firstly define a product $ \triangleright $ on the $\mathbb{K}$-linear span of populated multi-indices
\begin{equs}
\label{eq:monomialgrafting}
	z^{\beta}\triangleright  z^{\beta'} := z^{\beta}D(z^{\beta'}),
\end{equs}
where $D$ is the derivation given by
\begin{equs}
	D = \sum_{k \in \mathbb{N}} z_{k+1} \partial_{z_k}.  
\end{equs}
Here, $ \partial_{z_k} $ is the ordinary partial derivative in the coordinate $ z_k $.

Assuming that $z^{\beta'}$ is a populated multi-indice, we see immediately for the multi-indice $z^\beta$  with {$\beta = (1)$} that the product  \eqref{eq:monomialgrafting}
$$
	z^{(1)} \triangleright z^{\beta'} = {z_0}D(z^{\beta'})
$$ 
amounts to adding a child to one of the nodes of a rooted tree corresponding to multi-indice $z^{\beta'}$ in all possible ways. As an example, we observe that
$$
	{z_0}D(z_0z_1) = {z_0}z_1^2 + {z^2_0}z_2
	\qquad
	{z_0}D(z_0z^2_1) = {z_0}z^3_1 + 2 {z_0^2}z_1z_2.
$$
For populated multi-indice, the derivation $D$ can be interpreted as adding an edge to a vertex, which changes the latter from having, say, $k$ children to having $k+1$ children. Hence, more generally, for populated multi-indices the product \eqref{eq:monomialgrafting} plays a role analog to (pre-Lie) grafting of trees onto trees. 
\begin{proposition}
	The map $ \Psi $ is actually a pre-Lie morphism in the sense that one has for every trees  $ \tau_1 $ and $ \tau_2 $
	\begin{equs}
		\Psi \left( 	\tau_1 \curvearrowright \tau_2 \right) 
		= \Psi \left( 	\tau_1 \right)  \triangleright  \Psi \left(  \tau_2 \right)
		=\Psi \left( 	\tau_1 \right)  D  \Psi \left(  \tau_2 \right).
	\end{equs}
\end{proposition}

		\begin{proof}
			For any rooted trees $\tau_1$ and $\tau_2$, by the definition of the grafting product and the linearity of $\Psi $ 
			\begin{equs}
				\Psi \left( 	\tau_1 \curvearrowright \tau_2 \right) = \sum_{v \in V(\tau_2)} \Psi \left( \tau_1 \curvearrowright_v \tau_2\right).
			\end{equs}
			Suppose that we have chosen one specific node $v \in V(\tau_2)$ having $k$ children before grafting, $\curvearrowright_v$ will send the arity of $v$ from $k_v$ to $k_v+1$. The arity of other nodes in $\tau_1$ and $\tau_2$ will not be changed. Since $\tau_1$ and $\tau_1$ are connected through this newly added edge, 
			\begin{equs}
				\Psi \left( \tau_1 \curvearrowright_v \tau_2\right) = \Psi \left( 	\tau_1 \right) \Psi \left(  \tau_2 \right)\frac{z_{k_v+1}}{z_{k_v}}.
			\end{equs}
			Finally, we have
			\begin{equs}
				\Psi \left( 	\tau_1 \curvearrowright \tau_2 \right) &= \sum_{v \in V(\tau_2)} \Psi \left( \tau_1 \curvearrowright_v \tau_2\right)
				\\&=
				\Psi \left( 	\tau_1 \right)\sum_{k \in \mathbb{N}} z_{k+1} \partial_{z_k}\Psi \left(  \tau_2 \right)
				\\&=\Psi \left( 	\tau_1 \right)  D  \Psi \left(  \tau_2 \right).
			\end{equs}
		\end{proof}

We introduce the commutative forest product $\tilde{\prod}_{j=1}^n z^{\beta_j}$ (or alternatively denoted as $z^\alpha\, \tilde{\bullet}\, z^\beta$ ) for multi-indices. Similar to the forest product of trees, the product $\tilde{\prod}_{j=1}^n z^{\beta_j}$ amounts to  the juxtaposition of multi-indices $z^{\beta_j}$ without merging the frequencies $\beta_j$, i.e.,
\begin{equs} \label{forest_product}
	\tilde{\prod}_{j=1}^n z^{\beta_j} \ne z^{\sum_{j=1}^n \beta_j}.
\end{equs}  
Notice that there is no order among the $z^{\beta_j}$ since the forest product is commutative. 
We will use $\CM$ to denote the set of all forests of populated multi-indices. 
\begin{equs}
	\CM := \left\{\tilde{\prod}_{j=1}^n z^{\beta_j}: z^{\beta_j} \in \mathbf{M}, \,n \in \mathbb{N}  \right\}.
\end{equs}
In particular the empty forest $\emptyset \in \CM$ obtained when $n=0$ is the identity element of the forest product.
	For clarity, we will also use the following notations for some subsets of $\CM $.
	For any $n \in  \mathbb{N}$,
	\begin{equs}
		\CM_n := \left\{\tilde{\prod}_{j=1}^n z^{\beta_j}: z^{\beta_j} \in \mathbf{M}\right\}.
	\end{equs}
	We also introduce notation for the set of all non-empty forests of populated multi-indices
\begin{equs}
	\CM_{+} := \left\{\tilde{\prod}_{j=1}^n z^{\beta_j}: z^{\beta_j} \in \mathbf{M}, \,n \in \mathbb{N}_{+}  \right\}.
\end{equs}
To further simplify the notation we use $\tilde{z}^{\tilde{\beta}}$ denoting a forest of multi-indices, where $\tilde{\beta}$ is a collection of populated multi-indices without order.  For $\tilde{\beta} = \{\beta_1,\ldots,\beta_n\}$
\begin{equs}
	\tilde{z}^{\tilde{\beta}} := 	\tilde{\prod}_{j=1}^n z^{\beta_j}.
\end{equs}
The repetition of individual populated multi-indices in $\tilde{\beta}$ is allowed, which means $z^{\beta_1},\ldots,z^{\beta_n}$ are not necessarily to be distinct.
Then the length of a forest is simply the sum of the length of individual multi-indices consisting of it. i.e.,
\begin{equs}
	|\emptyset| = 0 \quad \text{and} \quad |\tilde{z}^{\tilde{\beta}}| = \sum_{j=1}^n|z^{\beta_j}| \quad \text{ for any forests }  \tilde{z}^{\tilde{\beta}} = 	\tilde{\prod}_{j=1}^n z^{\beta_j}.
	\end{equs}

One can notice that the derivation $D$ operates on a non-empty forest as the following:
for any $ \CM_{+}  \ni \tilde{z}^{\tilde{\beta}} = 	\tilde{\prod}_{j=1}^n z^{\beta_j}$,
	\begin{equs}
		D	\tilde{z}^{\tilde{\beta}} = \sum_{j=1}^n   \left(\tilde{\prod}_{i\ne j} z^{\beta_i}\right)\, \tilde{\bullet} \, Dz^{\beta_j},
	\end{equs}
	which means it obeys the Leibniz rule. 
	In the next definition, we introduce $\star_2$ a Guin--Oudom type product of forests of populated multi-indices, which is the counterparty to the Guin--Oudom grafting product of forests of trees. This means $\star_2$ can also be derived from the Guin--Oudom procedure (see \cite{GD,Guin1}) applied to the pre-Lie product $\triangleright$ defined in \eqref{eq:monomialgrafting} -- turning the span of multi-indices into a Novikov algebra.  It would be the core of the composition law of Multi-indice $B$-series.
\begin{definition}
\label{def:grafting of multi-indicies}
For the empty forest $\emptyset$ and a non-empty forest of populated multi-indices $\tilde{z}^{\tilde{\alpha}} \in \CM_{+}$ we define a product $\star_2$
\begin{equs}
	\emptyset\star_2\emptyset := \emptyset, 
	\quad \emptyset\star_2\tilde{z}^{\tilde{\alpha}} := \tilde{z}^{\tilde{\alpha}}, 
	\quad \text{and} \quad
	\tilde{z}^{\tilde{\alpha}}\star_2\emptyset := \tilde{z}^{\tilde{\alpha}}.
\end{equs}	
For $\CM_{+} \ni \tilde{z}^{\tilde{\beta}} = \tilde{\prod}_{j=1}^nz^{\beta_j}$ and $\tilde{z}^{\tilde{\alpha}} \in \CM_{+}$	we define $\star_2$ as
\begin{equs}
\label{eq:GLprod}
 	\tilde{\prod}_{j=1}^nz^{\beta_j}\star_2\tilde{z}^{\tilde{\alpha}} 
	:= \left(\prod_{j=1}^nz^{\beta_j}\right)D^n\tilde{z}^{\tilde{\alpha}}.
\end{equs}

\end{definition} 
\begin{remark} 
\label{rmk:GuinOudom}
Using the Guin--Oudom functor, one gets for any decorated trees $\tau_j$ and $\mu$,
\begin{equs} \label{eq:GuinOudom}
	\Psi\left(\tilde{\prod}_{j=1}^n \tau_j \star_{\tiny{\text{GO}}} \mu \right) = \tilde{\prod}_{j=1}^n \Psi(\tau_j) \star_2 \Psi(\mu),
\end{equs}
where $ \tilde{\prod}_{j=1}^n \tau_j  $ corresponds to the forest product for the trees and $ \star_{\tiny{\text{GO}}} $ is the Guin--Oudom product of trees which is obtained from the earlier defined grafting product $ \curvearrowright $ extended to forests (Guin--Oudom construction \cite{GD,Guin1} on the pre-Lie product  $ \curvearrowright $).
By applying the unshuffle coproduct to the Guin--Oudom type product $\star_2$, one can also obtain a Grossman--Larson type product similar to the more familiar Grossman--Larson product $\star_{\tiny{\text{GL}}}$  defined on forests of non-planar rooted trees. 
If we change $\star_{\tiny{\text{GO}}}$ to $\star_{\tiny{\text{GL}}}$ on the left-hand side of \eqref{eq:GuinOudom} and change $\star_2$ to the Grossman--Larson product of multi-indices, the morphism property is recovered. However, we will not need the Grossman--Larson product in the sequel.
\end{remark}

\bigskip

We now introduce one of the main objects of this paper: multi-indice $B$-series. They are defined as
\begin{equs} \label{multi_indices_B_series}
	B(a,h,f,y) = a(\emptyset) y + \sum_{z^\beta \in\mathbf{M} } 	
	\frac{h^{|z^\beta|} a(z^\beta)}{S(z^\beta)}F_f[z^\beta](y),
\end{equs}
where 
\begin{itemize}
	\item the map $ a $ is a linear map from $ \CM$ into $ \mathbb{R}$,
 which preserves the multiplicativity of the forest product
		\begin{equs}\label{character}
		a(\emptyset \tilde{\bullet} \tilde{z}^{\tilde{\beta}}) =   a(\emptyset)a(\tilde{z}^{\tilde{\beta}}) , \quad	a\left(\tilde{\prod}_{j=1}^nz^{\beta_j}\right) = \prod_{j=1}^n a(z^{\beta_j})
		\end{equs}
		for any forest $\tilde{z}^{\tilde{\beta}} \in \CM$ and any $z^{\beta_j} \in \mathbf{M}$ . 
		Therefore, if $a(\emptyset)=1$, $ a $ is a character of multi-indices with respect to the forest product
		 as $\emptyset$ is the identity in the forest product and $1$ is the identity in the real number multiplication.
		\item $ S(z^\beta) $ is the symmetry factor of the multi-indice  $ z^\beta$ given by
		\begin{equs}
			S(z^\beta) := \prod_{k \in \mathbb{N}}\left(k!\right)^{\beta(k)}
		\end{equs}
		and can be extended to the forest of multi-indices by
		\begin{equs}
		S\left(\tilde{\prod}_{j=1}^n\left(z^{\beta_j}\right)^{r_j}\right) := \prod_{j=1}^nr_j!S\left(z^{\beta_j}\right)^{r_j},
\end{equs}
		where $z^{\beta_j}$ are disjoint. Accordingly, the inner product is defined as
	\begin{equs}
			<z^\alpha, z^\beta> = \delta_{\tiny \alpha, \beta}S(z^\alpha), \quad
	\text{where } \quad 
	\delta_{\tiny \alpha, \beta} = 
	\begin{matrix}	
		\begin{cases}
			1, \text{if } \alpha = \beta \\
			
			0, \text{if } \alpha \ne \beta \\
		\end{cases}
		
	\end{matrix}.
\end{equs}

		\item $ F_f[z^\beta] $ are elementary differentials defined by 
		\begin{equs}
			&F_f[z^\beta](y) := \prod_{k \in \mathbb{N}}\left( f^{(k)}(y)\right)^{\beta(k)}, 
	\end{equs}
	for $z^\beta \in \mathbf{M}$ and with $f : \mathbb{R} \mapsto \mathbb{R} $ and $ f^{(k)} $ is the $k$-th derivative of $ f $.  Moreover, $F_f$ is linear in populated multi-indices and for a non-empty forest $\tilde{z}^{\tilde{\beta}} = \tilde{\prod}_{j=1}^nz^{\beta_j} $ we have $ \prod_{j=1}^n F_f[z^{\beta_j}]$.
	\item  similar to the classical rooted tree $B$-series, for the convergence of \eqref{multi_indices_B_series}, we assume that $f$ is an analytic function and the linear map $\alpha(z^\beta)$ satisfies 
$
		\alpha(z^\beta) \lesssim C^{|z^\beta|}
$
	for any $z^\beta \in \mathbf{M}$ and some constant $C$. Then \eqref{multi_indices_B_series} is bounded by a power series of $Ch$. Therefore, if $h$ sufficiently small it will converge.
\end{itemize}

\smallskip

Like for Butcher's $B$-series built on rooted trees, there exists a choice of the linear map $ a $ such that the multi-indices $B$-series is the exact solution of the 1-dimensional initial value problem:
\begin{equs} 
\label{main_equation}
	y' = f(y), \quad y(0) = y_0 \in \mathbb{R}.
	\end{equs}
This is the object of the next proposition:

\begin{proposition} 
\label{exact_solution_multi_indices}
We suppose that $ f \in \mathcal{C}^{\infty}(\mathbb{R},\mathbb{R}) $ is an analytic function. Then the exact solution of \eqref{main_equation} is given by a multi-indice $B$-series with a linear map $ a $ given by:
\begin{equs} 
\label{condition_1}
	a(  z^{\beta} ) 
	= \frac{1}{|z^{\beta}|} \sum_{z^{\beta} 
	= z_k \prod_{i=1}^k z^{\beta_i}} \prod_{i=1}^k a(z^{\beta_i})
\end{equs}
 where $\sum_{z^{\beta} 
	= z_k \prod_{i=1}^k z^{\beta_i}}$ runs over all $k \in \mathbb{N}$ and $z^{\beta_i} \in \mathbf{M}$. 
\end{proposition}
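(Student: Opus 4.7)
The plan is to characterize the character $a$ representing the Taylor expansion of the exact solution of \eqref{main_equation} by imposing the differential equation $y' = f(y)$ at the level of formal power series in $h$. I would set $y(h) = B(a, h, f, y_0)$ with $a(z^{\mathbf{0}}) = 1$, so that the initial condition is automatically satisfied, and then derive the recursion for $a$ by matching the two sides of the ODE term by term.

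Term-by-term differentiation yields
$$y'(h) = \sum_{z^\beta \in \mathbf{M}_0} \frac{|z^\beta|\, h^{|z^\beta|-1} a(z^\beta)}{S(z^\beta)}\, F_f[z^\beta](y_0).$$
On the other hand, I would expand $f(y(h))$ via Taylor's formula around $y_0$, $f(y(h)) = \sum_{k \geq 0} \frac{f^{(k)}(y_0)}{k!}(y(h)-y_0)^k$, substitute the $B$-series ansatz for $y(h) - y_0$, and expand each $k$-th power into a sum over ordered tuples $(z^{\beta_1}, \ldots, z^{\beta_k})$ of populated multi-indices. The computation pivots on two compatibility identities that follow directly from the definitions: from the elementary differential formula, $f^{(k)}(y_0)\,\prod_{i=1}^k F_f[z^{\beta_i}](y_0) = F_f\bigl[z_k \prod_{i=1}^k z^{\beta_i}\bigr](y_0)$, and from the explicit form of $S$, $S\bigl(z_k \prod_{i=1}^k z^{\beta_i}\bigr) = k!\, \prod_{i=1}^k S(z^{\beta_i})$. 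Moreover, if each $z^{\beta_i}$ is populated, then so is $z^\beta := z_k \prod_i z^{\beta_i}$, since $[z^\beta] = (1-k) + k\cdot 1 = 1$, and $|z^\beta| = 1 + \sum_i |z^{\beta_i}|$. Combining these, the Taylor expansion reindexes as
$$f(y(h)) = \sum_{z^\beta \in \mathbf{M}_0} \frac{h^{|z^\beta|-1}}{S(z^\beta)}\, F_f[z^\beta](y_0) \sum_{z^\beta = z_k \prod_{i=1}^k z^{\beta_i}} \prod_{i=1}^k a(z^{\beta_i}).$$

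Equating coefficients of $h^{|z^\beta|-1} F_f[z^\beta](y_0)/S(z^\beta)$ on both sides of $y'(h) = f(y(h))$ then produces $|z^\beta|\, a(z^\beta) = \sum_{z^\beta = z_k \prod z^{\beta_i}} \prod_i a(z^{\beta_i})$, which is exactly \eqref{condition_1}. Existence of at least one corolla-type decomposition for every populated $z^\beta$ is guaranteed by Theorem \ref{corolla_decomposition}, so the recursion determines $a$ unambiguously by induction on $|z^\beta|$, starting from the base case $a(z_0) = 1$ (obtained from the unique decomposition of $z_0$ with $k=0$ and empty product, matching $y'(0)=f(y_0)$). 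The main technical point to treat carefully is the bookkeeping in the reindexing step: although a given $z^\beta$ admits in general several corolla-type decompositions, each ordered tuple $(z^{\beta_1},\ldots,z^{\beta_k})$ appearing in the expansion of $(y(h)-y_0)^k$ corresponds to exactly one such decomposition, so the sum regroups cleanly into the inner sum on the right-hand side of \eqref{condition_1}.
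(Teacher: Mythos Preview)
Your proposal is correct and follows essentially the same approach as the paper: differentiate the $B$-series ansatz, Taylor-expand $f(y(h))$ around $y_0$, use the multiplicativity identities $f^{(k)}\prod_i F_f[z^{\beta_i}] = F_f[z_k\prod_i z^{\beta_i}]$ and $k!\prod_i S(z^{\beta_i}) = S(z_k\prod_i z^{\beta_i})$, and read off the recursion for $a$. Your write-up is in fact slightly more careful than the paper's, since you make explicit the base case $a(z_0)=1$, the preservation of the populated condition under $z_k\prod_i z^{\beta_i}$, and the bookkeeping in the reindexing step.
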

It is obvious that $a$ satisfies the assumption on the geometric growth condition of the linear map, which together with the analytic $f$ ensures the convergence of this multi-indice series.
\begin{proof}
We consider $ y $ given by
\begin{equs}
	y(h) = B(a,h,f,y_0).
\end{equs}
Computing its derivative, on gets
\begin{equs}
	y'(h) = \sum_{z^\beta \in\mathbf{M} } |z^\beta| \frac{h^{|z^\beta|-1} 
		a(z^\beta)}{S(z^\beta)}F_f[z^\beta](y_0).
\end{equs}
On the other side, if we plug $ y(h) $ inside $ f $ and we assumed that $f$ is analytic which allows us to perform a Taylor expansion:
\begin{equs}
	f(y(h)) 
	& = \sum_{k \in \mathbb{N}} \frac{1}{k!} f^{(k)}(y_0) (y(h)-y_0)^k \\
	&  = \sum_{k \in \mathbb{N}} \frac{1}{k!} f^{(k)}(y_0)	
		\left(\sum_{z^\beta \in \mathbf{M}}\frac{a(z^\beta)h^{|z^\beta|}}{S(z^\beta)}F_f[z^\beta](y_0)\right)^k\\ 
	& = 	\sum_{k \in \mathbb{N}}
		\sum_{z^{\beta_1},\ldots,z^{\beta_k} \in \mathbf{M}} \frac{1}{k!} f^{(k)}(y_0)\prod_{i=1}^k
	\left(\frac{a(z^{\beta_i})h^{|z^{\beta_i}|}}{S(z^{\beta_i})}F_f[z^{\beta_i}](y_0)\right).
	\end{equs}
One notices that by multiplicativity of the maps $ F_f[\cdot]$ and $S(\cdot)$, one has
\begin{equs}
 	k!\prod_{i=1}^k S(z^{\beta_i}) 
		&= S(  z_{k} \prod_{i=1}^k z^{\beta_i} ), \quad S(z_k) = k!, \\
 	f^{(k)} \prod_{i=1}^kF_f[z^{\beta_i}] 
	& = F_f[z_k \prod_{i=1}^k z^{\beta_i}], \quad f^{(k)} = F_f[z_k].
	\end{equs}
Observe that $ z_k \prod_{i=1}^k z^{\beta_i} $ is a populated multi-indice.
Moreover, one has
\begin{equs}
	\sum_{j=1}^k  |z^{\beta_i}| = |  z_k \prod_{i=1}^k z^{\beta_i} | -1.
\end{equs}
In the end, we get the following condition on the linear map $ a $:
\begin{equs}
	a(  z^{\beta} ) = \frac{1}{|z^{\beta}|} \sum_{z^{\beta} = z_k \prod_{i=1}^k z^{\beta_i}} \prod_{i=1}^k a(z^{\beta_i})
\end{equs}
where the $ \beta_i $ are populated multi-indices.
\end{proof}

\begin{remark}
The condition observed is in agreement with the one found in the context of singular SPDEs in \cite{LOT} and in Rough Paths in \cite{Li23}. 
\end{remark}

\section{Composition of multi-indice $B$-series}
\label{sec::composition}

We define the composition of a $B$-series $B(a,h,f,y)$ with a smooth function $ g \in \mathcal{C}^{\infty}(\mathbb{R},\mathbb{R})$ as 
\begin{equs} \label{compostion_smooth}
	g(B(a,h,f,y)) 
\end{equs}
Consequently, the composition of two multi-indices $B$-series is given by
\begin{equs}
	B(a,h,f,\cdot)\circ B(b,h,g,y) &= B(a,h,f,B(b,h,g,y)),
\end{equs}
Then as we have assumed that $f,g$ are analytic and $a,b$ has geometric growth bounded by the length of multi-indices, we can get the Taylor expansion form
\begin{equs}
	B(a,h,f,\cdot)\circ B(b,h,g,y) = \sum_{k \in \mathbb{N}} \frac{1}{k!}\partial^kB(a,h,f,y)(B(b,h,g,y)-y)^k.
\end{equs}
which is a convergent series since in Theorem  \ref{thm:main_theorem_composition} we can see if $a,b$ satisfy the growth condition the coefficient (in the form of $a(\cdot)b(\cdot)$) in the composed series also satisfies.

The following proposition illustrating the morphism property of elementary differentials with respect to the product $\star_2$ is necessary for the proof of the composition law.
\begin{proposition} 
\label{prop:F_derivatives_multiindices}
For every $n\in \mathbb{N}_{+}$, $z^{\beta_j} \in \mathbf{M}$ and $z^\alpha \in \mathbf{M}$, one has
\begin{equs}\label{grafting}
	F_f\left[\tilde{\prod}_{j=1}^nz^{\beta_j}\star_2z^\alpha\right] = \left(\prod_{j=1}^nF_f[z^{\beta_j}]\right)\partial^nF_f[z^\alpha].
\end{equs}		
\end{proposition}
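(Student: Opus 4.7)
The plan is to reduce the claim to two elementary facts about the map $F_f$ and the derivation $D$, namely:
\begin{itemize}
\item $F_f$ is multiplicative with respect to the ordinary commutative product of multi-indices, i.e. $F_f[z^\alpha z^\gamma] = F_f[z^\alpha]\cdot F_f[z^\gamma]$ whenever the sum $\alpha+\gamma$ is still populated (which is automatic on the right-hand side of \eqref{eq:GLprod} since one has $n$ non-populated factors $z^{\beta_j}$ multiplied by $D^n z^\alpha$);
\item the derivation $D = \sum_{k\in\mathbb{N}} z_{k+1}\partial_{z_k}$ intertwines with the usual derivative in $y$ via $F_f$, namely
\begin{equs}
F_f[D z^\gamma] = \partial F_f[z^\gamma],
\end{equs}
for every multi-indice $z^\gamma$, not necessarily populated.
\end{itemize}

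The first point is immediate from the definition $F_f[z^\beta] = \prod_k (f^{(k)})^{\beta(k)}$. For the second, I would just unfold both sides: applying $D$ produces $\sum_k \gamma(k) z_{k+1}z_k^{\gamma(k)-1}\prod_{j\neq k} z_j^{\gamma(j)}$, whose image under $F_f$ is $\sum_k \gamma(k) f^{(k+1)}(f^{(k)})^{\gamma(k)-1}\prod_{j\neq k}(f^{(j)})^{\gamma(j)}$, while the Leibniz rule applied to $\partial \prod_k (f^{(k)})^{\gamma(k)}$ yields exactly the same expression since $\partial f^{(k)} = f^{(k+1)}$. Iterating this intertwining gives $F_f[D^n z^\alpha] = \partial^n F_f[z^\alpha]$ by a trivial induction on $n$.

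With these two lemmas in hand, the proof of \eqref{grafting} is a one-line computation: by the definition of $\star_2$ in \eqref{eq:GLprod},
\begin{equs}
F_f\!\left[\tilde{\prod}_{j=1}^n z^{\beta_j}\star_2 z^\alpha\right]
= F_f\!\left[\Bigl(\prod_{j=1}^n z^{\beta_j}\Bigr) D^n z^\alpha\right]
= \Bigl(\prod_{j=1}^n F_f[z^{\beta_j}]\Bigr)\,F_f[D^n z^\alpha]
= \Bigl(\prod_{j=1}^n F_f[z^{\beta_j}]\Bigr)\,\partial^n F_f[z^\alpha],
\end{equs}
where multiplicativity is applied to the (ordinary, commutative) product on the second line and the intertwining lemma on the third.

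The only mildly subtle point, which I would highlight in a short remark, is the distinction between the forest product $\tilde{\prod}$ on the left-hand side and the ordinary commutative product $\prod$ inside $F_f$: once we have expanded $\star_2$ via \eqref{eq:GLprod} the forest structure is discarded and only the commutative monomial $\prod_j z^{\beta_j}\cdot D^n z^\alpha$ remains, which is why the simple multiplicativity of $F_f$ suffices. I do not expect any serious obstacle; the entire argument reduces to the Leibniz rule combined with the definition of $\star_2$ as a Guin--Oudom-type extension of the pre-Lie product $\triangleright$ (cf.\ Remark~\ref{rmk:GuinOudom}).
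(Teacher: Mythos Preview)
Your argument is correct and, in fact, more transparent than the paper's own proof. The paper proceeds by expanding both sides of \eqref{grafting} directly: it applies the Leibniz rule to $\partial^n F_f[z^\alpha]$, then separately applies the Leibniz rule to $D^n z^\alpha$, and matches the two double sums term by term. You instead isolate the single-step intertwining identity $F_f\circ D = \partial\circ F_f$ (a one-line Leibniz computation), extend it to $F_f\circ D^n = \partial^n\circ F_f$ by induction, and combine with multiplicativity of $F_f$. This is genuinely cleaner: the paper's bookkeeping of the nested partitions $\sum_k l_k = n$ and $\sum_i v_i^k = l_k$ is entirely absorbed into your induction step. What the paper's route buys is that it never explicitly invokes $F_f$ on non-populated monomials, though in practice it does so implicitly when writing $F_f[z_{k+v_i^k}] = \partial^{v_i^k} f^{(k)}$.

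One small slip to fix: in your parenthetical about populatedness you write ``$n$ non-populated factors $z^{\beta_j}$'', but the $z^{\beta_j}$ \emph{are} populated (they lie in $\mathbf{M}_0$); it is the intermediate monomials appearing in $D^n z^\alpha$ that fail to be populated (they have $[\cdot]=1-n$). More to the point, the populatedness constraint is irrelevant here: the formula $F_f[z^\gamma]=\prod_k (f^{(k)})^{\gamma(k)}$ defines a multiplicative, linear map on \emph{all} monomials, and that is all your argument needs. I would simply drop the parenthetical and state multiplicativity for arbitrary multi-indices.
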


\begin{proof}
By the Leibniz rule
	\begin{equs}
		\partial^nF_f[z^\alpha]
		= \sum_{\sum_{k \in \mathbb{N}}l_k = n}{n \choose l_{k_1},\ldots,l_{k_m}}
		\prod_{k \in \mathbb{N}}\partial^{l_k}
		\left(f^{(k)}\right)^{\alpha(k)}, 
	\end{equs}
where $l_{k_1},\ldots,l_{k_m}$ is the collection of all non-zero terms among $l_k$. Since $n$ is finite and $\alpha$ has finite support, the above partition of $n$ is finite and thus $m$ is finite. Then, regard $\left(f^{(k)}\right)^{\alpha(k)}$ as the $\alpha(k)$-multiple product of $f^{(k)}$ and apply the Leibniz rule again.
	\begin{equs}
		\partial^nF_f[z^\alpha] 
		=&
	\sum_{\sum_{k \in \mathbb{N}}l_k = n}{n \choose l_{k_1},\ldots,l_{k_m}}
		\\& 
		\prod_{k \in \mathbb{N}}\left(
		\sum_{v_1^k+ \cdots +v_{\alpha(k)}^k = l_k}{l_k \choose v_1^k,\ldots,v_{\alpha(k)}^k}
		\prod_{i=1}^{\alpha(k)}\partial^{v_i^k}f^{(k)}\right).
	\end{equs}
For the left-hand side of Equation \eqref{grafting}, 
	\begin{equs}
		F_f\left[\tilde{\prod}_{j=1}^nz^{\beta_j}\star_2z^\alpha\right] 
		= F_f\left[\left(\prod_{j=1}^nz^{\beta_j}\right)D^nz^\alpha\right].
	\end{equs}
Regarding $z_k^{\alpha(k)}$ as a $\alpha(k)$-multiple product of $z_k$ and applying the Leibniz rule leads to
	\begin{equs}
		D^nz^\alpha 
		=& 
		\sum_{\sum_{k \in \mathbb{N}}l_k = n}{n \choose l_{k_1},\ldots,l_{k_m}}	
		\\&
		\prod_{k \in \mathbb{N}}\left(
		\sum_{v_1^k+ \cdots +v_{\alpha(k)}^k = l_k}{l_k \choose v_1^k,\ldots,v_{\alpha(k)}^k}
		\prod_{i=1}^{\alpha(k)}D^{v_i^k}z_k\right)
		\\=&
		\sum_{\sum_{k \in \mathbb{N}}l_k = n}{n \choose l_{k_1},\ldots,l_{k_m}}	
		\\&
		\prod_{k \in \mathbb{N}}\left(
		\sum_{v_1^k+ \cdots +v_{\alpha(k)}^k = l_k}{l_k \choose v_1^k,\ldots,v_{\alpha(k)}^k}
		\prod_{i=1}^{\alpha(k)}z_{k+v_i^k}\right).	
	\end{equs}
Therefore,
	\begin{equs}
		F_f\left[\tilde{\prod}_{j=1}^nz^{\beta_j}\star_2z^\alpha\right]
		=&
		\left(\prod_{j=1}^nF_f[z^{\beta_j}]\right)\sum_{\sum_{k \in \mathbb{N}}l_k = n}{n \choose l_{k_1},\ldots,l_{k_m}}
		\\&
		\prod_{k \in \mathbb{N}}\left(
		\sum_{v_1^k+ \cdots +v_{\alpha(k)}^k = l_k}{l_k \choose v_1^k,\ldots,v_{\alpha(k)}^k}
		\prod_{i=1}^{\alpha(k)}F_f\left[z_{k+v_i^k}\right]\right)
		\\=& 
		\left(\prod_{j=1}^nF_f[z^{\beta_j}]\right)\sum_{\sum_{k \in \mathbb{N}}l_k = n}{n \choose l_{k_1},\ldots,l_{k_m}}
		\\&        
		\prod_{k \in \mathbb{N}}\left(
		\sum_{v_1^k+ \cdots +v_{\alpha(k)}^k = l_k}{l_k \choose v_1^k,\ldots,v_{\alpha(k)}^k}
		\prod_{i=1}^{\alpha(k)}\partial^{v_i^k}f^{(k)}\right)
	\end{equs}
	which concludes the proof.
\end{proof}
One can see that this proposition can be extended from $z^\alpha \in \mathbf{M}$ to $\tilde{z}^{\tilde{\alpha}} \in \CM_{+}$, as for a forest $\tilde{z}^{\tilde{\alpha}} = \tilde{\prod}_{j=1}^n z^{\alpha_j}$ we have $F_f[\tilde{z}^{\tilde{\alpha}}] = \prod_{j=1}^n F_f[z^{\alpha_j}]$.  Since both $D^n$ applied to a forest and $\partial^n$ applied to the monomial of elementary differentials obey the Leibniz rule the proof works for forest cases. 
	\begin{remark}
		This proposition can also be proved by adapting the universality property to \cite[Eq.~(6.21), (6.22)]{LOT}.
	\end{remark}

Suppose that $\Delta_2$ is the dual coproduct of $\star_2$, which means they have the adjoint relation
\begin{equs} \label{adjoint_2}
	<\tilde{z}^{\tilde{\beta}} \otimes \tilde{z}^{\tilde{\alpha}} , \Delta_2 \tilde{z}^{\tilde{\mu}} > 
	= <\tilde{z}^{\tilde{\beta}}  \star_2 \tilde{z}^{\tilde{\alpha}} , \tilde{z}^{\tilde{\mu}} >
\end{equs}
for $\tilde{z}^{\tilde{\beta}}, \tilde{z}^{\tilde{\alpha}} , \tilde{z}^{\tilde{\mu}} \in \CM$.
In order to show the well-posedness of $\Delta_2$, we need the following lemmas.
\begin{lemma} \label{lemma:length_sum}
	If $<\tilde{z}^{\tilde{\beta}}  \otimes  \tilde{z}^{\tilde{\alpha}}, \Delta_2 \tilde{z}^{\tilde{\mu}}> \ne 0$, then $|\tilde{z}^{\tilde{\beta}} | + | \tilde{z}^{\tilde{\alpha}}| = | \tilde{z}^{\tilde{\mu}}|$, for any $\tilde{z}^{\tilde{\beta}}, \tilde{z}^{\tilde{\alpha}} , \tilde{z}^{\tilde{\mu}} \in \CM$.
\end{lemma}
\begin{proof}
	We first write the coproduct $\Delta_2$ and the product $\star_2$ in the following forms.
	\begin{equs}
		& \Delta_2 \tilde{z}^{\tilde{\mu}} = \sum_{\tilde{z}^{\tilde{\beta}'}, \tilde{z}^{\tilde{\alpha}'} \in \CM} C_{\Delta_2}(\tilde{z}^{\tilde{\beta'}}, \tilde{z}^{\tilde{\alpha}'},  \tilde{z}^{\tilde{\mu}}) \tilde{z}^{\tilde{\beta}'}  \otimes  \tilde{z}^{\tilde{\alpha}'}
    	\\& \tilde{z}^{\tilde{\beta}}  \star_2 \tilde{z}^{\tilde{\alpha}} = \sum_{\tilde{z}^{\tilde{\mu}'}\in \CM} C_{\star_2}(\tilde{z}^{\tilde{\beta}}, \tilde{z}^{\tilde{\alpha}},  \tilde{z}^{\tilde{\mu}'})\tilde{z}^{\tilde{\mu}'},
	\end{equs}
	where $C_{\Delta_2} \in \mathbb{N}$ and $C_{\star_2} \in \mathbb{N}$. 
	Then the inner products can be rewritten as
	\begin{equs} \label{inner_2}
		& <\tilde{z}^{\tilde{\beta}} \otimes \tilde{z}^{\tilde{\alpha}} , \Delta_2 \tilde{z}^{\tilde{\mu}} >  = C_{\Delta_2}(\tilde{z}^{\tilde{\beta}}, \tilde{z}^{\tilde{\alpha}},  \tilde{z}^{\tilde{\mu}}) S(\tilde{z}^{\tilde{\beta}})S(\tilde{z}^{\tilde{\alpha}})
		\\& <\tilde{z}^{\tilde{\beta}}  \star_2 \tilde{z}^{\tilde{\alpha}} , \tilde{z}^{\tilde{\mu}} > =  C_{\star_2}(\tilde{z}^{\tilde{\beta}}, \tilde{z}^{\tilde{\alpha}},  \tilde{z}^{\tilde{\mu}}) S(\tilde{z}^{\tilde{\mu}}).
	\end{equs}
	By the adjoint relation \eqref{adjoint_2}, $<\tilde{z}^{\tilde{\beta}}  \otimes  \tilde{z}^{\tilde{\alpha}}, \Delta_2 \tilde{z}^{\tilde{\mu}}> \ne 0$ implies $C_{\star_2}(\tilde{z}^{\tilde{\beta}}, \tilde{z}^{\tilde{\alpha}},  \tilde{z}^{\tilde{\mu}}) \ne 0$, which means  $\tilde{z}^{\tilde{\mu}}$ can be obtained by ``grafting" $\tilde{z}^{\tilde{\beta}} $ to $\tilde{z}^{\tilde{\alpha}}$ through $\star_2$. In the definition of $\star_2$ we can see that $|\tilde{z}^{\tilde{\beta}} |$ is preserved through the monomial multiplication. The only thing left is the deviation $D$ that amounts to replace one $z_k$ by one $z_{k+1}$, which preserves $|\tilde{z}^{\tilde{\alpha}}|$.
\end{proof}
\begin{lemma}\label{lemma:length_finite}
	For any non-empty forest $\tilde{z}^{\tilde{\beta}} \in \CM_{+}$, if $|\tilde{z}^{\tilde{\beta}}|$ is fixed and finite, the number of possible forms  $\tilde{z}^{\tilde{\beta}} \in \CM$ can take is finite.
\end{lemma}
\begin{proof}
 Firstly, for an individual populated multi-indice $z^\beta \in \CM_1 = \mathbf{M}$, from the population condition \eqref{populated_1} and $\beta(k) \ge 0$ we have 
 \begin{equs}
 	|z^\beta|-1 = \sum_{k \in \mathbb{N}} k\beta(k)
 =
 	\sum_{k \in \mathbb{N}, k < |z^\beta| } k\beta(k)
 \end{equs}
 Therefore, once $|z^\beta|$ is fixed, the numbers of possible $k$ and $\beta(k)$ are bounded. For a forest of populated multi-indices, if $|\tilde{z}^{\tilde{\beta}}|$ is fixed, the number of possible partitions $|\tilde{z}^{\tilde{\beta}}| = \sum_{j=1}^{n_{\tilde{\beta}}}|z^{\beta_j}|$ is finite, where $n_{\tilde{\beta}} \in \mathbb{N}_{+}$ is the number of individual multi-indices in the forest $\tilde{z}^{\tilde{\beta}}$.
\end{proof}
\begin{proposition}\label{prop:well_posedness_Delta_2}
	The dual coproduct $\Delta_2$ is well-defined.
\end{proposition}
\begin{proof}
	We also use the form
	\begin{equs}
		 \Delta_2 \tilde{z}^{\tilde{\mu}} = \sum_{\tilde{z}^{\tilde{\beta}}, \tilde{z}^{\tilde{\alpha}} \in \CM} C_{\Delta_2}(\tilde{z}^{\tilde{\beta}}, \tilde{z}^{\tilde{\alpha}},  \tilde{z}^{\tilde{\mu}}) \tilde{z}^{\tilde{\beta}}  \otimes  \tilde{z}^{\tilde{\alpha}}.
	\end{equs}
	Then we need to show 
	\begin{itemize}
		\item $C_{\Delta_2}(\tilde{z}^{\tilde{\beta}}, \tilde{z}^{\tilde{\alpha}},  \tilde{z}^{\tilde{\mu}})$ is finite for any $\tilde{z}^{\tilde{\beta}}, \tilde{z}^{\tilde{\alpha}},\tilde{z}^{\tilde{\mu}} \in \CM$,
		\item the set $\{(\tilde{z}^{\tilde{\beta}} ,  \tilde{z}^{\tilde{\alpha}}): C_{\Delta_2}(\tilde{z}^{\tilde{\beta}}, \tilde{z}^{\tilde{\alpha}},  \tilde{z}^{\tilde{\mu}}) \ne 0\}$ is finite for any $\tilde{z}^{\tilde{\mu}} \in \CM$.
	\end{itemize}
	The first statement can be proved by the adjoint relation \eqref{adjoint_2} and the inner products \eqref{inner_2}. Through the definition of $\star_2$ we can see that $C_{\star_2}(\tilde{z}^{\tilde{\beta}}, \tilde{z}^{\tilde{\alpha}},  \tilde{z}^{\tilde{\mu}})$ is finite and accordingly $C_{\Delta_2}(\tilde{z}^{\tilde{\beta}}, \tilde{z}^{\tilde{\alpha}},  \tilde{z}^{\tilde{\mu}})$ is finite. The second statement is a corollary of the Lemma \ref{lemma:length_sum}. Since $|\tilde{z}^{\tilde{\mu}}|$ is finite, $|\tilde{z}^{\tilde{\beta}}|$ and $| \tilde{z}^{\tilde{\alpha}}|$ are finite. Therefore, due to Lemma~\ref{lemma:length_finite}, the forms of $\tilde{z}^{\tilde{\beta}}$ and $\tilde{z}^{\tilde{\alpha}}$ are finite if $C_{\Delta_2}(\tilde{z}^{\tilde{\beta}}, \tilde{z}^{\tilde{\alpha}},  \tilde{z}^{\tilde{\mu}}) \ne 0$.
\end{proof}

Then, we can present the composition law for multi-indice $B$-series in the following way

\begin{theorem} 
\label{thm:main_theorem_composition}
For linear maps $a$ and $b$ as defined in \eqref{character}  with $b(\emptyset)=1$, the composition of two multi-indices $B$-series satisfies
\begin{equs}
	B(a,h,f,\cdot)\circ B(b,h,f,y) = B(b\star_2a,h,f,y)
\end{equs}
where for $\tilde{z}^{\tilde{\beta}} \in \CM$
\begin{equs}
	(b\star_2 a) (\tilde{z}^{\tilde{\beta}}) := <b \otimes a, \Delta_2 \tilde{z}^{\tilde{\beta}}>,
\end{equs}	
	and the pairing is defined as 
\begin{equs}
	<b \otimes a, \Delta_2 \tilde{z}^{\tilde{\beta}}> = (b \otimes a)(\Delta_2 \tilde{z}^{\tilde{\beta}}).
\end{equs}
\end{theorem}

\begin{proof} One starts with
	\begin{equs}
		&B\left(a,h,f,B(b,h,f,y)\right) 
		\\ =& 
		a(\emptyset) B(b,h,f,y)   +  \sum_{z^\alpha \in\mathbf{M} } 	
		\frac{h^{|z^\alpha|} a(z^\alpha)}{S(z^\alpha)}F_f[z^\alpha](B(b,h,f,y)).
	\end{equs}
	For $z^\alpha \in\mathbf{M}$, Taylor expansion to $F_f[z^\alpha](B(b,h,f,y))$ around $y$ results in 
	\begin{equs}
		F_f[z^\alpha]\left(B(b,h,f,y)\right) 
		= \sum_{k \in \mathbb{N}}\frac{1}{k!}
		\partial^k
		F_f[z^\alpha](y)\left(B(b,h,f,y)-y\right)^k.
	\end{equs}
	Since $b(\emptyset)=1$, one has
	\begin{equs}
		&F_f[z^\alpha]\left(B(b,h,f,y)\right) 
		\\=&
		\sum_{k \in \mathbb{N}}\frac{1}{k!}
		\partial^kF_f[z^\alpha](y)
		\left(\sum_{z^\beta \in\mathbf{M} } 	
		\frac{h^{|z^\beta|} b(z^\beta)}{S(z^\beta)}F_f[z^\beta](y)\right)^k
		\\=& 
		F_f[z^\alpha](y)+
		\sum_{k \in \mathbb{N}_{+}}
		\sum_{z^{\beta_1},\ldots,z^{\beta_k} \in \mathbf{M}}
		\frac{1}{k!}\partial^kF_f[z^\alpha](y)\prod_{j=1}^k
		\left(\frac{b(z^{\beta_j})h^{|z^{\beta_j}|}}{S(z^{\beta_j})}F_f[z^{\beta_j}](y)\right).
	\end{equs}
	By Proposition ~\ref{prop:F_derivatives_multiindices},
	\begin{equs}
		&\lefteqn{F_f[z^\alpha]\left(B(b,h,f,y)\right) 
		- F_f[z^\alpha](y)} 
		\\=&
	   \sum_{k \in \mathbb{N}_{+}}
		\sum_{z^{\beta_1},\ldots,z^{\beta_k} \in \mathbf{M}}
		\frac{1}{k!}
		\frac{h^{\sum_{j=1}^k|z^{\beta_j}|}}{\prod_{j=1}^kS(z^{\beta_j})}\prod_{j=1}^kb(z^{\beta_j})	
		F_f\left[\tilde{\prod}_{j=1}^kz^{\beta_j}\star_2z^\alpha\right](y).
	\end{equs}
	Consider the permutation among $z^{\beta_1},\ldots,z^{\beta_k}$, and one has
	\begin{equs}
		&F_f[z^\alpha]\left(B(b,h,f,y)\right) -F_f[z^\alpha](y)
		\\=&
		\sum_{k \in \mathbb{N}_{+}}
		\sum_{\tilde{z}^{\tilde{\beta}} \in \CM_k}
		{k \choose r_1,\ldots,r_m}
		\frac{1}{k!}	\frac{h^{|\tilde{z}^{\tilde{\beta}} |}}{\prod_{j=1}^k S(z^{\beta_j})}b(\tilde{z}^{\tilde{\beta}} )F_f\left[\tilde{z}^{\tilde{\beta}} \star_2z^\alpha\right](y)
		\\=&
		\sum_{k \in \mathbb{N}_{+}}
		\sum_{\tilde{z}^{\tilde{\beta}}  \in \CM_k}
		\frac{h^{|\tilde{z}^{\tilde{\beta}} |}}{S(\tilde{z}^{\tilde{\beta}} )}b(\tilde{z}^{\tilde{\beta}} )F_f\left[\tilde{z}^{\tilde{\beta}} \star_2z^\alpha\right](y)			
	\end{equs}
	where $r_1,\ldots,r_m$ are the repetition among $z^{\beta_1},\ldots,z^{\beta_k}$. i.e.,
	\begin{equs}
		\tilde{\prod}_{j=1}^kz^{\beta_j} = \tilde{\prod}_{i=1}^m\left(z^{\gamma_i}\right)^{r_i}
	\end{equs}
	where $z^{\gamma_i} \ne z^{\gamma_j}$ if $i \ne j$.
	Then by the fact
	\begin{equs}
		\tilde{z}^{\tilde{\beta}} \star_2z^\alpha &= \sum_{z^\mu \in \mathbf{M}}
		\frac{<\tilde{z}^{\tilde{\beta}} \star_2z^\alpha, z^\mu>}{S(z^\mu)} z^\mu
		= \sum_{z^\mu \in \mathbf{M}}\frac{<\tilde{z}^{\tilde{\beta}}  \otimes z^\alpha, \Delta_2z^\mu>}{S(z^\mu)} z^\mu
	\end{equs}
	and $b(\emptyset) = 1$ we have
	\begin{equs}
		&B\left(a,h,f,B(b,h,f,y)\right)
		\\=& 
		a(\emptyset) 	
		\left(b(\emptyset)y+\sum_{z^\beta \in\mathbf{M}} 	
		\frac{h^{|z^\beta|} b(z^\beta)}{S(z^\beta)}F_f[z^\beta](y)\right)
		+\sum_{z^\alpha \in \mathbf{M}}
		\frac{a(z^\alpha)b(\emptyset)h^{|z^\alpha|}} {S(z^\alpha)}
		F_f[z^\alpha](y)
		\\+&
		\sum_{z^\mu \in \mathbf{M}}
		\sum_{z^\alpha \in \mathbf{M}}
		\sum_{k \in \mathbb{N}_{+}}
		\sum_{\tilde{z}^{\tilde{\beta}}  \in \CM_k}
		\frac{<\tilde{z}^{\tilde{\beta}}  \otimes z^\alpha, \Delta_2z^\mu>}
		{S(\tilde{z}^{\tilde{\beta}} )S(z^\alpha)}
		\frac{a(z^\alpha)b(\tilde{z}^{\tilde{\beta}} )h^{|z^\mu|}}{S(z^\mu)}F_f[z^\mu](y),
	\end{equs}
	where we used the Lemma~\ref{lemma:length_sum} that $h^{|z^\mu|} = h^{|\tilde{z}^{\tilde{\beta}} | + |z^\alpha|}$ if $<\tilde{z}^{\tilde{\beta}}  \otimes z^\alpha, \Delta_2z^\mu> \ne 0$. 
	Moreover, we have
	\begin{equs}
		&a(\emptyset) 	
		\left(b(\emptyset)y+\sum_{z^\beta \in\mathbf{M} } 	
		\frac{h^{|z^\beta|} b(z^\beta)}{S(z^\beta)}F_f[z^\beta](y)\right) 
		\\=& a(\emptyset)b(\emptyset)y + \sum_{z^\beta \in\mathbf{M} } 	a(\emptyset)\frac{h^{|z^\beta|} b(z^\beta)}{S(z^\beta)}F_f[z^\beta](y)
		\\=& a(\emptyset)b(\emptyset)y +  \sum_{z^\beta \in\mathbf{M} } \sum_{z^\mu \in\mathbf{M}} 
		\frac{<\emptyset  \otimes z^\beta, \Delta_2z^\mu>}
		{S(\emptyset)S(z^\beta)}
		\frac{a(\emptyset )b(z^\beta)h^{|z^\mu|}}{S(z^\mu)}F_f[z^\mu](y),
	\end{equs}
	and 
	\begin{equs}
		&\sum_{z^\alpha \in \mathbf{M}}
		\frac{a(z^\alpha)b(\emptyset)h^{|z^\alpha|}} {S(z^\alpha)}
		F_f[z^\alpha](y)
		\\=& \sum_{z^\alpha \in \mathbf{M}} \sum_{z^\mu \in\mathbf{M}} 
		\frac{<z^\alpha \otimes \emptyset, \Delta_2z^\mu>}
		{S(z^\alpha)S(\emptyset)}
		\frac{a(z^\alpha)b(\emptyset)h^{|z^\mu|}}{S(z^\mu)}F_f[z^\mu](y)
	\end{equs}
	where we used the fact that
	\begin{equs}
		\frac{<\emptyset  \otimes z^\beta, \Delta_2z^\mu>}
		{S(\emptyset)S(z^\beta)} = 1
	\end{equs}
	is the only non-zero coefficient and it is  obtained when $z^\mu = z^\beta$. It is the same for 
	\begin{equs}
		\frac{<z^\alpha \otimes \emptyset, \Delta_2z^\mu>}
		{S(z^\alpha)S(\emptyset)} =1 \quad \text{with } z^\mu = z^\alpha .
	\end{equs}
	Finally since $\frac{<\tilde{z}^{\tilde{\beta}}  \otimes z^\alpha, \Delta_2z^\mu>}
	{S(\tilde{z}^{\tilde{\beta}} )S(z^\alpha)}$ is the coefficient in front of the term $\tilde{z}^{\tilde{\beta}}  \otimes z^\alpha$ in $\Delta_2z^\mu$, we can conclude
	\begin{equs}
		B\left(a,h,f,B(b,h,f,y)\right) = B[b \star_2a,h,f,y].
	\end{equs}

\end{proof}

\section{Substitution of multi-indice $B$-series}
\label{sec::substitution}

Chartier, Hairer, and Vilmart \cite{CHV,MR2657947} showed that the substitution law for $B$-series with trees is efficient in interpreting modified integrator method which is an extension to the backward error analysis theory. Therefore, we would like to explore the substitution law for multi-indices B-series.
Firstly, the substitution is defined as: 
\begin{equs} 
\label{substitution_def}
	B(a,h,f,\cdot)	\circ_s \tilde f := 	B(a,h,\tilde f,y).
\end{equs}
It amounts to replacing the function $f$ in the multi-indice $B$-series by another function $\tilde f$. This can be used for backward error analysis when approximating $f$ by $\tilde f$. If we choose to  replace $f$ by $h^{-1}B(b,h,g,y)$, by applying the definition of elementary differentials, one has
\begin{equs}
	B(a,h,f,\cdot)	\circ_s h^{-1}B(b,h,g,y) = a(\emptyset)y+\sum_{z^\beta \in \mathbf{M}}
	\frac{a(z^\beta)h^{|z^\beta|}}{S(z^\beta)}
	\hat{F}_g[z^\beta](y),
\end{equs}
where
\begin{equs}
	\hat{F}_g[z^\beta](y) &=  \prod_{k \in \mathbb{N}}\left(h^{-1}\partial^kB(b,h,g,y)\right)^{\beta(k)}
	\\& = h^{ \tiny \text{$-\sum_{k \in \mathbb{N}}\beta(k)$}} \prod_{k \in \mathbb{N}}\left(\partial^kB(b,h,g,y)\right)^{\beta(k)}
	\\& = h^{\tiny \text{$-|z^\beta|$}} \prod_{k \in \mathbb{N}}\left(\partial^kB(b,h,g,y)\right)^{\beta(k)}.
\end{equs}
Similar to the substitution law for tree-based $B$-series \cite{CA}, we need an insertion product for multi-indices defined below.

\begin{definition}
\label{def:insertion_multi-indices}
The insertion of $z^\beta \in \mathbf{M}$ into $z^\alpha \in \mathbf{M}$ is defined to be
\begin{equs}
	z^\beta \blacktriangleright z^\alpha : =\sum_{k \in \mathbb{N}}\left(D^{k}z^{\beta}\right)
	\left(\partial_{z_{k}}z^\alpha\right).
\end{equs}	
\end{definition}

\noindent We note that $z^\beta \blacktriangleright z^\alpha$ is equivalent to a pre-Lie product defined in \cite[Def.~4.3]{Li23} up to coefficients $\frac{1}{k!}$. It is summing up all possible ways to replace one single $z_k$ in $z^\alpha$ by $z^{\beta}$. It can be seen from the directional derivative 
\begin{equs}
	\partial_{z_{k}}z^\alpha = \alpha(k)\frac{z^\alpha}{z_k}
\end{equs}
that $\alpha(k)$ indicates that we have $\alpha(k)$ choices of $z_k$ to be replaced and $\frac{1}{z_k}$ means that one $z_k$ is replaced.
Since $z_k$ has arity $k$, to ensure that $z^\beta \blacktriangleright z^\alpha$ is a populated multi-indice, we need $D^{k}$ to increase the arity of $z^{\beta}$ by $k$. Moreover, we can extend this insertion from $z^\alpha \in \mathbf{M}$ to $\tilde{z}^{\tilde{\alpha}} \in \CM_{+}$ by letting $\partial_{z_k}$ obey the Leibniz rule. i.e., for any $ \CM_{+}  \ni \tilde{z}^{\tilde{\beta}} = 	\tilde{\prod}_{j=1}^n z^{\beta_j}$
\begin{equs} \label{Leibniz_partial_k}
		\partial_{z_k}	\tilde{z}^{\tilde{\beta}} = \sum_{i=1}^n   \left(\tilde{\prod}_{j\ne i} z^{\beta_j}\right)\, \tilde{\bullet} \, \partial_{z_k} z^{\beta_i}.
\end{equs}
\begin{example}
	Suppose $z^\beta = z_0z_1$ and $z^\alpha = z_0^2z_1z_2$. One can easily check that $z^\beta$ and $z^\alpha$ are populated. Then,
	\begin{equs}
		z^\beta \blacktriangleright z^\alpha &= z_0z_12(z_0z_1z_2)+(z_0z_2+z_1^2)z_0^2z_2+(z_0z_3+3z_1z_2)(z_0^2z_1)
		\\&=
		2z_0^2z_1^2z_2+z_0^3z_2^2+z_0^2z_1^2z_2+z_0^3z_1z_3+3z_0^2z_1^2z_2,
	\end{equs}
	where the right-hand side is a linear combination of populated multi-indices.
\end{example}

\begin{remark}
It can be seen from
\begin{equs}
		F_f\left[	z^\beta \blacktriangleright z^\alpha\right]
		= \sum_{k \in \mathbb{N}} \alpha(k)F_f[ z^\alpha] \frac{F_f[D^kz^\beta]}{F_f[z_k]} 
		= \sum_{k \in \mathbb{N}} \alpha(k)F_f[ z^\alpha]
		\frac{\partial^kF_f[z^\beta]}{f^{(k)}} 
\end{equs}
that the insertion $\blacktriangleright$ for multi-indices is the counterparty of tree insertion $\widehat{\blacktriangleright}$ defined in \cite[Sec.~3.4]{BM22}, since
\begin{equs}
	F\left[\sigma \widehat{\blacktriangleright} \tau \right]
	&= 
	\sum_{v \in N_\tau}
	F\left[(P_v(\tau)\star_{\tiny{\text{GO}}}\sigma) \, \widehat{\triangleright}_vT_v(\tau)\right]
	\\&= 
	\sum_{v \in N_\tau}
	F[\tau]\frac{\partial^{k(v)}F[\sigma]}{f^{(k(v))}},
\end{equs}
where $\sigma$ and $\tau$ are trees, and $k(v)$ is the number of children of the node $v$, $ P_v(\tau ) $  corresponds to subtrees attached to $v$ in $\tau$, 
$ T_v(\tau) $ is the trunk obtained by removing all the branches attached to $v$ and $ \widehat{\triangleright}_v  $ identifies the root of the tree on the left hand side of this product with the node $v$ of the tree of the right hand side. 
\end{remark}	

Furthermore, the simultaneous insertion of $n$ multi-indices is defined as the following which is the Guin--Oudom construction on the insertion product $\blacktriangleright$.

\begin{definition} 
\label{def:star_1_def}
	The simultaneous insertion of $z^{\beta_j} \in \mathbf{M}$ into $z^\alpha \in \mathbf{M}$ is
\begin{equs}
	\tilde{\prod}_{j=1}^n z^{\beta_j} \star_1 z^\alpha:= \sum_{k_1,...,k_n\in \mathbb{N}}  
	\left(\prod_{j=1}^nD^{k_j}z^{\beta_j}\right)\left[\left(\prod_{j=1}^n\partial_{z_{k_j}}\right)z^\alpha\right] .
\end{equs}
	We also put an additional restriction that $n = |z^\alpha|$ in the definition of $\star_1$ as we would like to use it to describe the substitution law,  (see the proof of Theorem \ref{main_theorem_substitution} for reason).
\end{definition}
We can extend this simultaneous insertion from $z^\alpha \in \mathbf{M}$ to $\tilde{z}^{\tilde{\alpha}} \in \CM_{+}$ by Leibniz rule on $\left(\prod_{j=1}^n\partial_{z_{k_j}}\right)$ as in Eq. \eqref{Leibniz_partial_k}. The according modification of the additional constrain would be $n = |\tilde{z}^{\tilde{\alpha}}|$.

\begin{remark}
$\left(\prod_{j=1}^n\partial_{z_{k_j}}\right )z^\alpha$ encapsulates the feature that we can only insert up to $\alpha(k)$ multi-indices $z^{\beta_j}$ to replace $z_k$, which corresponds to the requirement on tree insertion that we can only insert up to one tree to replace a single node.
\end{remark}

Moreover, $\star_1$ commutes with $D$ which leads to 
\begin{lemma}
\label{lemma:commutative}
	For any $z^{\beta_j} \in \mathbf{M}$ and $\tilde{z}^{\tilde{\alpha}} \in \CM_{+}$
\begin{equs}
	\tilde{\prod}_{j=1}^n z^{\beta_j} \star_1 D^m\tilde{z}^{\tilde{\alpha}} = D^m\left(\tilde{\prod}_{j=1}^n z^{\beta_j} \star_1 \tilde{z}^{\tilde{\alpha}}\right).
\end{equs}
\end{lemma}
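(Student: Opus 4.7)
The plan is to proceed by induction on $m$, reducing to the case $m=1$, and then exploit the canonical commutation relation between $D$ and the partial derivatives $\partial_{z_k}$.

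The basic computation I would record at the outset is the commutator
\begin{equs}
	[D,\partial_{z_k}] = -\partial_{z_{k-1}}, \qquad \text{with the convention } \partial_{z_{-1}} = 0,
\end{equs}
which follows immediately from $D = \sum_{\ell} z_{\ell+1}\partial_{z_\ell}$ and the fact that $[\partial_{z_k}, z_{\ell+1}] = \delta_{k,\ell+1}$. Iterating this and using the Leibniz rule for the commutator yields
\begin{equs}
	\Big[D,\prod_{j=1}^n\partial_{z_{k_j}}\Big] = -\sum_{i=1}^n \partial_{z_{k_i-1}}\prod_{j\neq i}\partial_{z_{k_j}}.
\end{equs}

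Now for the case $m=1$, I would expand $D\big(\tilde{\prod}_j z^{\beta_j}\star_1 z^{\hat\alpha}\big)$ using the fact that $D$ is a derivation of the polynomial algebra. This produces two families of terms: one where $D$ hits $\prod_j D^{k_j}z^{\beta_j}$ (call this $A$), and one where $D$ hits $\prod_j \partial_{z_{k_j}}z^{\hat\alpha}$ (call this $B$). Using the commutator identity above, the term $B$ splits as
\begin{equs}
	\prod_j D^{k_j}z^{\beta_j}\cdot\prod_j\partial_{z_{k_j}}Dz^{\hat\alpha} \;-\; \prod_j D^{k_j}z^{\beta_j}\cdot\sum_i\partial_{z_{k_i-1}}\prod_{j\neq i}\partial_{z_{k_j}}z^{\hat\alpha}.
\end{equs}
Summed over the $k_1,\ldots,k_n$, the first piece is precisely $\tilde{\prod}_j z^{\beta_j}\star_1 Dz^{\hat\alpha}$, which is the right-hand side we want.

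It remains to show that the piece $A$ cancels against the second piece of $B$. The idea is to reindex: in the sum
\begin{equs}
	\sum_{k_1,\ldots,k_n}\prod_j D^{k_j}z^{\beta_j}\cdot\sum_i\partial_{z_{k_i-1}}\prod_{j\neq i}\partial_{z_{k_j}}z^{\hat\alpha},
\end{equs}
terms with $k_i=0$ vanish; for each $i$ substitute $k_i\mapsto k_i+1$ and factor one copy of $D$ out of $D^{k_i+1}z^{\beta_i}$. This turns the inner sum over $i$ into exactly the Leibniz expansion of $D\big(\prod_j D^{k_j}z^{\beta_j}\big)\cdot\prod_j\partial_{z_{k_j}}z^{\hat\alpha}$, which is $A$. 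Thus $A$ and the leftover piece of $B$ cancel, proving the $m=1$ identity. Finally, applying this identity $m$ times gives the full lemma.

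The main obstacle is purely bookkeeping: aligning the reindexing shift $k_i\mapsto k_i+1$ on the $i$-th summand with the Leibniz expansion on the forest side, and being careful with the edge case $k_i=0$ (handled by the convention $\partial_{z_{-1}}=0$). No deeper structural input is needed, since the commutation relation between $D$ and the $\partial_{z_k}$'s is essentially the content of the statement.
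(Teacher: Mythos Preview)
Your proof is correct and follows essentially the same approach as the paper: the paper does not spell out the argument but simply remarks that it ``can be verified simply through induction on $m$ and $n$'' (referring to \cite[Lemma~4.2,~4.6]{Li23}), and your commutator computation $[D,\partial_{z_k}]=-\partial_{z_{k-1}}$ together with the reindexing is exactly what that induction unwinds to. The only organisational difference is that you treat general $n$ directly via the iterated commutator rather than inducting on $n$ separately, which is a minor streamlining.
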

This statement can be verified simply through induction on $m$ and $n$, and a similar proof is provided in \cite[Lemma 4.2, 4.6]{Li23}.	

Then, we can define an operator on the linear map $b: \CM \mapsto \mathbb{R}$ as defined in \eqref{character} and $ \tilde{z}^{\tilde{\beta}} \in \CM_{+}$
\begin{equs}
	M_b(\tilde{z}^{\tilde{\beta}}) 
	:= \sum_{\tilde{z}^{\tilde{\alpha}} \in \CM_{|\tilde{z}^{\tilde{\beta}}|}} \frac{b(\tilde{z}^{\tilde{\alpha}})
	h^{|\tilde{z}^{\tilde{\alpha}}|}}{S(\tilde{z}^{\tilde{\alpha}})}
	\tilde{z}^{\tilde{\alpha}} \star_1 \tilde{z}^{\tilde{\beta}},
\end{equs}
which is a morphism of $\star_2$ and one has the following proposition.

\begin{proposition}	
\label{prop:morphism}	
For any $z^\beta \in \mathbf{M}$ and $\tilde{z}^{\tilde{\beta}} \in \CM_{+}$
\begin{equs}\label{eq:morephism_M}
	M_b\left(\tilde{z}^{\tilde{\beta}}\star_2z^\beta\right) 
	= M_b\left(\tilde{z}^{\tilde{\beta}}\right)\star_2M_b\left(z^\beta\right).
\end{equs}
\end{proposition}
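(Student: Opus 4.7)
The plan is to derive this morphism identity from a cointeraction-type compatibility between $\star_1$ and $\star_2$, in the spirit of the cointeraction between the extraction/contraction and Butcher--Connes--Kreimer coproducts on trees \cite{CA}. Once the cointeraction identity
\begin{equs}
z^{\hat{\alpha}} \star_1 \left(z^{\hat{\beta}} \star_2 z^\beta\right) = \sum_{z^{\hat{\alpha}} = z^{\hat{\alpha}_{(1)}} \tilde{\cdot} z^{\hat{\alpha}_{(2)}}} \left(z^{\hat{\alpha}_{(1)}} \star_1 z^{\hat{\beta}}\right) \star_2 \left(z^{\hat{\alpha}_{(2)}} \star_1 z^\beta\right)
\end{equs}
is available for every $z^{\hat{\alpha}} \in \CM$, the character property of $b$ together with multiplicativity of $S$ and of $h^{|\cdot|}$ decouples the resulting double sum into $M_b(z^{\hat{\beta}}) \star_2 M_b(z^\beta)$.

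To establish the cointeraction identity, I would first prove a Leibniz-type distributivity of $\star_1$ over ordinary products: for any monomial or forest expressions $P, Q$,
\begin{equs}
z^{\hat{\alpha}} \star_1 (P \cdot Q) = \sum_{z^{\hat{\alpha}} = z^{\hat{\alpha}_{(1)}} \tilde{\cdot} z^{\hat{\alpha}_{(2)}}} \left(z^{\hat{\alpha}_{(1)}} \star_1 P\right) \cdot \left(z^{\hat{\alpha}_{(2)}} \star_1 Q\right),
\end{equs}
which follows directly from the ordinary Leibniz rule applied to each partial derivative $\partial_{z_k}$ appearing in Definition \ref{def:star_1_def} (recalling that $D = \sum_k z_{k+1}\partial_{z_k}$ is itself a derivation). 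Next, writing $z^{\hat{\beta}} = \tilde{\prod}_{i=1}^m z^{\beta_i}$ and expanding $z^{\hat{\beta}} \star_2 z^\beta = \big(\prod_{i=1}^m z^{\beta_i}\big) D^m z^\beta$ via Definition \ref{def:grafting of multi-indicies}, I apply this distributivity. In the factor involving $D^m z^\beta$, Lemma \ref{lemma:commutative} pulls $D^m$ outside, yielding $z^{\hat{\alpha}_{(2)}} \star_1 D^m z^\beta = D^m(z^{\hat{\alpha}_{(2)}} \star_1 z^\beta)$. In the factor $\prod_i z^{\beta_i}$, iterating the distributivity gives a sum of ordinary products $\prod_i (z^{\hat{\alpha}_{(1),i}} \star_1 z^{\beta_i})$ over splittings $z^{\hat{\alpha}_{(1)}} = \tilde{\prod}_i z^{\hat{\alpha}_{(1),i}}$, which matches exactly the Leibniz extension of $\star_1$ applied to the forest $z^{\hat{\beta}} = \tilde{\prod}_i z^{\beta_i}$. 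Reapplying Definition \ref{def:grafting of multi-indicies} to recognize the resulting expression as a $\star_2$ then produces the claimed cointeraction identity.

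Substituting into the definition of $M_b$ and using that $b$ is a character, one obtains a double sum that factorizes provided the combinatorics of splittings is compatible with $S$: concretely, for each ordered pair $(z^{\hat{\alpha}_{(1)}}, z^{\hat{\alpha}_{(2)}})$ the multiplicity with which it arises from splittings of $z^{\hat{\alpha}} = z^{\hat{\alpha}_{(1)}} \tilde{\cdot} z^{\hat{\alpha}_{(2)}}$ equals $S(z^{\hat{\alpha}})/(S(z^{\hat{\alpha}_{(1)}}) S(z^{\hat{\alpha}_{(2)}}))$, which is a direct binomial computation from the formula for $S$ on forests with repeated constituents. This yields $M_b(z^{\hat{\beta}}) \star_2 M_b(z^\beta)$. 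The main obstacle is the cointeraction identity itself, and more precisely the bookkeeping step that reconciles the ordinary commutative monomial product $\prod_i z^{\beta_i}$ arising from Definition \ref{def:grafting of multi-indicies} with the forest product $\tilde{\cdot}$ governing the Leibniz extension of $\star_1$ to forests; iterating the distributivity on the monomial product is what ensures these two match up and produce the $\star_2$ on the right-hand side.
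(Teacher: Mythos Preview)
Your proposal is correct and follows essentially the same route as the paper. The paper also expands $M_b(z^{\hat{\beta}}\star_2 z^\beta)$, applies the Leibniz rule to $\prod_j \partial_{z_{k_j}}$ acting on $z^{\hat{\beta}} D^{n_{\hat{\beta}}} z^\beta$ to obtain a sum over partitions $z^{\hat{\alpha}} = z^{\hat{\alpha}^{(1)}}\tilde{\cdot}z^{\hat{\alpha}^{(2)}}$, invokes Lemma~\ref{lemma:commutative} to commute $D^{n_{\hat{\beta}}}$ through $\star_1$, recognises the result as $(z^{\hat{\alpha}^{(1)}}\star_1 z^{\hat{\beta}})\star_2(z^{\hat{\alpha}^{(2)}}\star_1 z^\beta)$, and then uses the multiplicativity of $b$ together with the binomial/repetition count $S(z^{\hat{\alpha}})/(S(z^{\hat{\alpha}^{(1)}})S(z^{\hat{\alpha}^{(2)}}))$ to factorise the double sum; the only difference is that you isolate the cointeraction identity as a stand-alone statement whereas the paper establishes it inline.
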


\begin{proof}
Since $\CM_{+} = \cup_{n \in \mathbb{N}_{+}} \CM_n$, the proof is equivalent to show that $\tilde{z}^{\tilde{\beta}} \in \CM_{n_{\tilde{\beta}}}$ satisfies Equation \ref{eq:morephism_M}, for any $n_{\tilde{\beta}} \in \mathbb{N}_{+}$. Suppose that $\tilde{z}^{\tilde{\beta}} \in \CM_{n_{\tilde{\beta}}}$,  $\tilde{z}^{\tilde{\beta}} = \tilde{\prod}_{j=1}^{n_{\tilde{\beta}}} z^{\beta_j}$, and $n_{\tilde{\alpha}} = |\tilde{z}^{\tilde{\beta}}|+|z^\beta|$ , then by Lemma~\ref{lemma:length_sum}
	\begin{equs}
		M_b\left(\tilde{z}^{\tilde{\beta}}\star_2z^\beta\right)
		=&
		\sum_{\tilde{z}^{\tilde{\alpha}} \in \CM_{n_{\tilde{\alpha}}}} \frac{b(\tilde{z}^{\tilde{\alpha}})
		h^{|\tilde{z}^{\tilde{\alpha}}|}}{S(\tilde{z}^{\tilde{\alpha}})} \tilde{z}^{\tilde{\alpha}} 
		\star_1
		\left(\prod_{j=1}^{n_{\tilde{\beta}}} z^{\beta_j}D^{n_{\tilde{\beta}}}z^\beta\right)
		\\=&
		\frac{1}{{n_{\tilde{\alpha}}\choose r_1,\ldots, r_{m_{\tilde{\alpha}}}}}
		\sum_{z^{\alpha_1},\ldots,z^{\alpha_{n_{\tilde{\alpha}}}} \in \mathbf{M}}
		\sum_{k_1,\ldots,k_{n_{\tilde{\alpha}}}\in \mathbb{N}}
		\\& 
		\frac{\prod_{j=1}^{n_{\tilde{\alpha}}}b(z^{\alpha_j})h^{|z^{\alpha_j}|}}
		{\prod_i^{m_{\tilde{\alpha}}}r_i!\prod_{j=1}^{n_{\tilde{\alpha}}}S(z^{\alpha_j})}
		\left(\prod_{j=1}^{n_{\tilde{\alpha}}} D^{k_j}z^{\alpha_j}\right)
		\left[
		\left(\prod_{j=1}^{n_{\tilde{\alpha}}}\partial_{z_{k_j}}\right)
		\left(\prod_{j=1}^{n_{\tilde{\beta}}} z^{\beta_j}D^{n_{\tilde{\beta}}}z^\beta\right)\right]
		\\=&
		\frac{1}{n_{\tilde{\alpha}}!}
		\sum_{z^{\alpha_1},\ldots,z^{\alpha_{n_{\tilde{\alpha}}}} \in \mathbf{M}}
		\sum_{k_1,\ldots,k_{n_{\tilde{\alpha}}}\in \mathbb{N}}
		\frac{\prod_{j=1}^{n_{\tilde{\alpha}}}b(z^{\alpha_j})h^{|z^{\alpha_j}|}}
		{\prod_{j=1}^{n_{\tilde{\alpha}}}S(z^{\alpha_j})}
		\\& 
		\left(\prod_{j=1}^{n_{\tilde{\alpha}}} D^{k_j}z^{\alpha_j}\right)
		\left[
		\left(\prod_{j=1}^{n_{\tilde{\alpha}}}\partial_{z_{k_j}}\right)
		\left(\prod_{j=1}^{n_{\tilde{\beta}}} z^{\beta_j}D^{n_{\tilde{\beta}}}z^\beta\right) \right],
	\end{equs}
where we used the multiplicativity of $b$, r is the repetition among $z^{\alpha_j}$, and $m_{\tilde{\alpha}}$ is the number of distinct forms of multi-indices among $z^{\alpha_j}$. Then, by the Leibniz rule and Lemma ~\ref{lemma:commutative}, 
\begin{equs}
	& \sum_{k_1,\ldots,k_{n_{\tilde{\alpha}}}\in \mathbb{N}}
	\left(\prod_{j=1}^{n_{\tilde{\alpha}}} D^{k_j}z^{\alpha_j}\right)
	\left[
	\left(\prod_{j=1}^{n_{\tilde{\alpha}}}\partial_{z_{k_j}}\right)
\left(\prod_{j=1}^{n_{\tilde{\beta}}} z^{\beta_j}D^{n_{\tilde{\beta}}}z^\beta\right) \right]
	\\=&
	\sum_{l \le n_{\tilde{\alpha}}}
	\sum_{\{\tilde{z}^{\tilde{\alpha}^{(1)}},\tilde{z}^{\tilde{\alpha}^{(2)}}\} \in \CP_l(\tilde{z}^{\tilde{\alpha}})}
	\sum_{k_1,\ldots,k_{n_{\tilde{\alpha}}}\in \mathbb{N}}
	{n_{\tilde{\alpha}} \choose l}
	\left[\prod_{i=1}^l D^{k^{(1)}_i}z^{\alpha^{(1)}_i}
	\left(\prod_{i=1}^l\partial_{z_{k^{(1)}_i}}\right)\prod_{j=1}^{n_{\tilde{\beta}}} z^{\beta_j}\right]
	\\&
	\left[\prod_{i=1}^{n_{\tilde{\alpha}}-l}D^{k^{(2)}_i}z^{\alpha^{(2)}_i}
	\left(\prod_{i=1}^{n_{\tilde{\alpha}}-l}\partial_{z_{k^{(2)}_i}}\right)D^{n_{\tilde{\beta}}}z^\beta\right]
	\\=&
	\sum_{\{\tilde{z}^{\tilde{\alpha}^{(1)}},\tilde{z}^{\tilde{\alpha}^{(2)}}\} \in \CP_{|\tilde{z}^{\tilde{\beta}}|}(\tilde{z}^{\tilde{\alpha}})}
	{n_{\tilde{\alpha}} \choose |\tilde{z}^{\tilde{\beta}}|}
	\left(\tilde{z}^{\tilde{\alpha}^{(1)}} \star_1 \tilde{z}^{\tilde{\beta}}\right) 
	D^{n_{\tilde{\beta}}}\left(\tilde{z}^{\tilde{\alpha}^{(2)}} \star_1 z^\beta\right)
\end{equs}
where $\CP_l(\tilde{z}^{\tilde{\alpha}})$ is the set of all possible partitions of $\tilde{z}^{\tilde{\alpha}}$ into two parts
	\begin{equs}
		\tilde{z}^{\tilde{\alpha}^{(1)}} \tilde{\bullet}\tilde{z}^{\tilde{\alpha}^{(2)}} 
		= \tilde{z}^{\tilde{\alpha}} \quad \text{ with } \tilde{z}^{\tilde{\alpha}^{(1)}} \in \CM_l,	
		\tilde{z}^{\tilde{\alpha}^{(2)}} \in \CM_{n_{\tilde{\alpha}}-l},
	\end{equs}
	\begin{equs}
		\tilde{z}^{\tilde{\alpha}^{(1)}}  = \tilde{\prod}_{i=1}^l z^{\alpha^{(1)}_i}, \quad 
		\tilde{z}^{\tilde{\alpha}^{(2)}}  = \tilde{\prod}_{i=1}^{n_{\tilde{\alpha}}-l} z^{\alpha^{(2)}_i},
	\end{equs}
	and $k^{(j)}_i$ is the $k$ corresponding to $z^{\alpha^{(j)}_i}$ for  $j=1,2$.
	In the last equality we used the fact that only terms with $l = |\tilde{z}^{\tilde{\beta}}|$ is non-zero. The reason is that $n_{\tilde{\alpha}} = |\tilde{z}^{\tilde{\beta}}|+|z^\beta|$. If $l < |\tilde{z}^{\tilde{\beta}}|$, then $n_{\tilde{\alpha}}-l > |z^\beta|$ will cause $\left(\prod_{i=1}^{n_{\tilde{\alpha}}-l}\partial_{z_{k^{(2)}_i}}\right)D^{n_{\tilde{\beta}}}z^\beta = 0$. It is the same for $l > |\tilde{z}^{\tilde{\beta}}|$.
	Since $\tilde{z}^{\tilde{\alpha}^{(1)}} \star_1 \tilde{z}^{\tilde{\beta}} \in \CM_{n_{\tilde{\beta}}}$, we have
	\begin{equs}
		\left(\tilde{z}^{\tilde{\alpha}^{(1)}} \star_1 \tilde{z}^{\tilde{\beta}}\right) D^{n_{\tilde{\beta}}}
		\left(\tilde{z}^{\tilde{\alpha}^{(2)}} \star_1 z^\beta\right) 
		= \left(\tilde{z}^{\tilde{\alpha}^{(1)}} \star_1 \tilde{z}^{\tilde{\beta}}\right) 
		\star_2
		\left(\tilde{z}^{\tilde{\alpha}^{(2)}} \star_1 z^\beta\right).
	\end{equs}		
	We then consider the permutation
	\begin{equs}
		&\sum_{z^{\alpha_1},\ldots,z^{\alpha_{n_{\tilde{\alpha}}}} \in \mathbf{M}}
		\sum_{\{\tilde{z}^{\tilde{\alpha}^{(1)}},\tilde{z}^{\tilde{\alpha}^{(2)}}\} \in \CP_{|\tilde{z}^{\tilde{\beta}}|}(\tilde{z}^{\tilde{\alpha}})}
		\\=&
		{|\tilde{z}^{\tilde{\beta}}| \choose r^{(1)}_1,\ldots,r^{(1)}_{m_1}}
		{n_{\tilde{\alpha}}-|\tilde{z}^{\tilde{\beta}}| \choose r^{(2)}_1,\ldots,r^{(2)}_{m_2}}
		\sum_{\tilde{z}^{\tilde{\alpha}^{(1)}} \in \CM_{|\tilde{z}^{\tilde{\beta}}|}}
		\sum_{\tilde{z}^{\tilde{\alpha}^{(2)}} \in \CM_{n_{\tilde{\alpha}}-|\tilde{z}^{\tilde{\beta}}|}} 	 	
	\end{equs}
where $r^{(1)}$ is the repetition among individual multi-indices in the forest  $\tilde{z}^{\tilde{\alpha}^{(1)}}$, and $r^{(2)}$ is for $\tilde{z}^{\tilde{\alpha}^{(2)}}$.		
	Finally, we obtain
	\begin{equs}
		M_b\left(\tilde{z}^{\tilde{\beta}}\star_2z^\beta\right)
		=&
		\sum_{\tilde{z}^{\tilde{\alpha}^{(1)}} \in \CM_{|\tilde{z}^{\tilde{\beta}}|}}
		\sum_{\tilde{z}^{\tilde{\alpha}^{(2)}} \in \CM_{n_{\tilde{\alpha}}-|\tilde{z}^{\tilde{\beta}}|}}
		\frac{b(\tilde{z}^{\tilde{\alpha}^{(1)}})h^{|\tilde{z}^{\tilde{\alpha}^{(1)}}|}}{S(\tilde{z}^{\tilde{\alpha}^{(1)}})} 
		\\&
		\frac{b(\tilde{z}^{\tilde{\alpha}^{(2)}})h^{|\tilde{z}^{\tilde{\alpha}^{(2)}}|}}{S(\tilde{z}^{\tilde{\alpha}^{(2)}})}
		\left(\tilde{z}^{\tilde{\alpha}^{(1)}} 
		\star_1 
		\tilde{z}^{\tilde{\beta}}\right) 
		\star_2
		\left( \tilde{z}^{\tilde{\alpha}^{(2)}} \star_1 z^\beta\right)
		\\=&
		\left(\sum_{\tilde{z}^{\tilde{\alpha}^{(1)}} \in \CM_{|\tilde{z}^{\tilde{\beta}}|}}
		\frac{b(\tilde{z}^{\tilde{\alpha}^{(1)}})h^{|\tilde{z}^{\tilde{\alpha}^{(1)}}|}}{S(\tilde{z}^{\tilde{\alpha}^{(1)}})}
		\tilde{z}^{\tilde{\alpha}^{(1)}}
		\star_1 
		\tilde{z}^{\tilde{\beta}}\right) 
		\star_2
		\\&
		\left(\sum_{\tilde{z}^{\tilde{\alpha}^{(2)}} \in \CM_{|z^\beta|}}
		\frac{b(\tilde{z}^{\tilde{\alpha}^{(2)}})h^{|\tilde{z}^{\tilde{\alpha}^{(2)}}|}}{S(\tilde{z}^{\tilde{\alpha}^{(2)}})}
		\tilde{z}^{\tilde{\alpha}^{(2)}}
		\star_1 z^\beta\right)
		\\=&
		M_b\left(\tilde{z}^{\tilde{\beta}}\right)\star_2M_b\left(z^\beta\right).
	\end{equs}
\end{proof}

\begin{remark}
The morphism Proposition \eqref{prop:morphism} can also be proved by the universality of \cite[Eq. 4.11]{Li23} which can be interpreted as the morphism written as the level of the pre-Lie products. It is similar in the spirit to \cite[Thm 3.25]{BM22} where such a statement has been established for decorated trees.
	\end{remark}

Suppose that the coproduct $\Delta_1$ is the dual of $\star_1$, which means they have the adjoint relation
\begin{equs}
	<\tilde{z}^{\tilde{\beta}} \otimes \tilde{z}^{\tilde{\alpha}} , \Delta_1 \tilde{z}^{\tilde{\mu}} > 
	= <\tilde{z}^{\tilde{\beta}}  \star_1 \tilde{z}^{\tilde{\alpha}} , \tilde{z}^{\tilde{\mu}} >
\end{equs}
for $\tilde{z}^{\tilde{\beta}}, \tilde{z}^{\tilde{\alpha}} , \tilde{z}^{\tilde{\mu}} \in \CM_{+}$. We need the following lemma in showing the well-posedness of $\Delta_1$.
\begin{lemma}\label{lemma:length_equal}
	For any  $\tilde{z}^{\tilde{\beta}}, \tilde{z}^{\tilde{\alpha}} , \tilde{z}^{\tilde{\mu}} \in \CM_{+}$, if $<\tilde{z}^{\tilde{\beta}} \otimes \tilde{z}^{\tilde{\alpha}} , \Delta_1 \tilde{z}^{\tilde{\mu}} > \ne 0$, the following holds
	\begin{equs}
		|\tilde{z}^{\tilde{\mu}}| = |\tilde{z}^{\tilde{\beta}}|.
	\end{equs}
\end{lemma}
\begin{proof}
	$<\tilde{z}^{\tilde{\beta}} \otimes \tilde{z}^{\tilde{\alpha}} , \Delta_1 \tilde{z}^{\tilde{\mu}} > \ne 0$ indicates that $\tilde{z}^{\tilde{\mu}}$ can be obtained by ``inserting" $\tilde{z}^{\tilde{\beta}}$ to $\tilde{z}^{\tilde{\alpha}}$ using the product $\star_1$. Through the definition of $\star_1$ we can see that 
	$D$ will not change the length (it only replace some $z_k$ by $z_{k+1}$) but $\partial_{z_{k}}$ will reduce it by $1$ and $n_{\tilde\beta}$ is the number of individual populated multi-indices in the forest $\tilde{z}^{\tilde{\beta}}$. Therefore, the length will be shorten exactly by $n_{\tilde\beta}$. 
		\begin{equs}
		|\tilde{z}^{\tilde{\mu}}| = |\tilde{z}^{\tilde{\beta}}| + |\tilde{z}^{\tilde{\alpha}}| - n_{\tilde\beta}.
	\end{equs}
	Since we have the constrain in the definition of $\star_1$ that $|\tilde{z}^{\tilde{\beta}}| = n_{\tilde\alpha}$, it follows that 
	\begin{equs}
		|\tilde{z}^{\tilde{\mu}}| = |\tilde{z}^{\tilde{\beta}}|.
	\end{equs}
\end{proof}
\begin{proposition}
	The dual coproduct $\Delta_1$ is well-defined.
\end{proposition}
\begin{proof}
	One can show it in the same way as the proof of the well-posedness of $\Delta_2$ (Proposition~\ref{prop:well_posedness_Delta_2}). The only difference is to replace Lemma~\ref{lemma:length_sum} by Lemma~\ref{lemma:length_equal}
\end{proof}
Then we have the substitution law for multi-indices $B$-series as below.	
\begin{theorem}	
\label{main_theorem_substitution}
If $b(\emptyset) = 0$ and $(b\star_1 a) (\emptyset)$ is set to be $a(\emptyset)$, one has
\begin{equs}
	B(a,h,f,y)	\circ_s  h^{-1}B(b,h,g,y) = B(b \star_1 a, h,g,y)
\end{equs}
where for $z^\gamma \in \mathbf{M}$
\begin{equs}
	(b \star_1 a) (z^\gamma) = <b \otimes a, \Delta_1 z^\gamma>,
\end{equs}
	and the pairing is defined as 
	\begin{equs}
		<b \otimes a, \Delta_1 z^\gamma> = (b \otimes a)(\Delta_1 z^\gamma).
	\end{equs}
\end{theorem}

\begin{proof}
We start with the right-hand side
	\begin{equs}
		B(b\star_1 a, h,g,y) =
		a(\emptyset)y+
		\sum_{z^{\bar{\beta}} \in \mathbf{M}} 
		\frac{<b \otimes a, \Delta_1 z^{\bar{\beta}}>h^{|z^{\bar{\beta}}|}}
		{S(z^{\bar{\beta}})} 
		F_g[z^{\bar{\beta}}](y).
	\end{equs}
	For each $z^{\bar\beta} \in \mathbf{M}$ we have
	\begin{equs}
		&<b \otimes a, \Delta_1 \bar{\beta}>h^{|z^{\bar{\beta}}|}
		\\=&
		\sum_{\tilde{z}^{\tilde{\beta}} \in \CM}
		\sum_{z^\beta \in \mathbf{M}}
		\frac{<\tilde{z}^{\tilde{\beta}}\otimes z^\beta, \Delta_1 z^{\bar{\beta}}>}{S(\tilde{z}^{\tilde{\beta}})
		S(z^\beta)}b(\tilde{z}^{\tilde{\beta}})a(z^\beta)h^{|z^{\bar{\beta}}|}.
	\end{equs}
	Notice that as long as $<\tilde{z}^{\tilde{\beta}}\otimes z^\beta, \Delta_1 z^{\bar{\beta}}>\ne 0$, by Lemma~\ref{lemma:length_equal} we have
	\begin{equs}
		h^{|z^{\bar{\beta}}|} = h^{|\tilde{z}^{\tilde{\beta}}|}.
	\end{equs}
Then by the adjoint relationship,
	\begin{equs}
		&<b \otimes a, \Delta_1 \bar{\beta}>h^{|z^{\bar{\beta}}|}
		\\=&
		\sum_{\tilde{z}^{\tilde{\beta}} \in \CM}
		\sum_{z^\beta \in \mathbf{M}}
		\frac{<\tilde{z}^{\tilde{\beta}}\star_1 z^\beta,  z^{\bar{\beta}}>}{S(\tilde{z}^{\tilde{\beta}})
		S(z^\beta)}b(\tilde{z}^{\tilde{\beta}})a(z^\beta)h^{|\tilde{z}^{\tilde{\beta}}|}.
	\end{equs}
	Therefore,
	\begin{equs}
		&B(b\star_1 a, h,g,y)-a(\emptyset)y
		\\=&
		\sum_{z^\beta \in \mathbf{M}}
		\sum_{\tilde{z}^{\tilde{\beta}} \in \CM_{|z^\beta|}}
		\frac{b(\tilde{z}^{\tilde{\beta}})a(z^\beta)h^{|\tilde{z}^{\tilde{\beta}}|}
		}{S(\tilde{z}^{\tilde{\beta}})S(z^\beta)}
		F_g\left[		
		\sum_{z^{\bar{\beta}} \in \mathbf{M}}
		\frac{<\tilde{z}^{\tilde{\beta}}\star_1 z^\beta,  z^{\bar{\beta}}>}{S(z^{\bar{\beta}})}
		z^{\bar{\beta}}
		\right](y)
		\\=&
		\sum_{z^\beta \in \mathbf{M}}
		\sum_{\tilde{z}^{\tilde{\beta}} \in \CM_{|z^\beta|}}
		\frac{b(\tilde{z}^{\tilde{\beta}})a(z^\beta)h^{|\tilde{z}^{\tilde{\beta}}|}
		}{S(\tilde{z}^{\tilde{\beta}})S(z^\beta)}
		F_g\left[\tilde{z}^{\tilde{\beta}}\star_1 z^\beta\right](y)
		\\=&
		\sum_{z^\beta \in \mathbf{M}}
		\frac{a(z^\beta)}{S(z^\beta)}
		F_g\left[M_b(z^\beta)\right](y).
	\end{equs}
	Then, the proof boils down to show that
	\begin{equs}
		F_g\left[M_b(z^\beta)\right] = h^{|z^\beta|}\hat{F}_g[z^\beta] \quad \text{for any } z^\beta \in \mathbf{M}. 
	\end{equs}
	We prove this by induction on $|z^\beta|$. Firstly, 
	\begin{equs}
		F_g\left[M_b(z_0)\right] 
		&= F_g\left[\sum_{\tilde{z}^{\tilde{\alpha}} \in \CM_{1}} \frac{b(\tilde{z}^{\tilde{\alpha}})
		h^{|\tilde{z}^{\tilde{\alpha}}|}}{S(\tilde{z}^{\tilde{\alpha}})} \tilde{z}^{\tilde{\alpha}} \star_1 z_0\right]
		\\&=F_g\left[\sum_{z^\alpha \in \mathbf{M}} \frac{b(z^{\alpha})h^{|z^{\alpha}|}}{S(z^{\alpha})} 
		z^{\alpha} \star_1 z_0\right]
		\\&=B(b,h,g,y) =h^{|z_0|} \hat{F}_g[z_0],		
	\end{equs}
	where we used the facts that $b(\emptyset) = 0$ and $z^\alpha \star_1 z_0 = z^\alpha$. From Corollary~\ref{corolla_decomposition}, 
	one can see that any $z^\beta \in \mathbf{M} \backslash\{z_0\}$ can be expressed as $\tilde{\prod}_{j=1}^nz^{\beta_j} \star_2 z_0$ for some $n \in \mathbb{N}_{+}$.
	Thus, the induction hypothesis is
	\begin{equs}
		F_g\left[M_b(z^{\beta_j})\right] =  h^{|z^{\beta_j}|}
			\hat{F}_g[z^{\beta_j}] \quad \text{for } j=1,\ldots,n. 
	\end{equs}
	Then, by Proposition \ref{prop:F_derivatives_multiindices} and \ref{prop:morphism},
	\begin{equs}
		F_g\left[M_b(z^{\beta})\right] &= F_g\left[M_b\left(\tilde{\prod}_{j=1}^nz^{\beta_j} \star_2 z_0\right)\right] 
		\\&=
		F_g\left[M_b\left(\tilde{\prod}_{j=1}^nz^{\beta_j}\right) \star_2 M_b(z_0)\right] 
		\\&=
		F_g\left[\tilde{\prod}_{j=1}^nM_b(z^{\beta_j}) \star_2 M_b(z_0)\right] 
		\\&=
		\prod_{j=1}^nF_g\left[M_b(z^{\beta_j})\right] \partial^n F_g\left[M_b(z_0)\right].
	\end{equs}
	Finally, by the induction base and hypothesis,
			\begin{equs}
			F_g\left[M_b(z^{\beta})\right] &=
			\prod_{j=1}^n\prod_{k \in \mathbb{N}} \left(\partial^kB(b,h,g,y)\right)^{\beta_j(k)}\partial^n
			B(b,h,g,y) 
			\\&=  h^{|z^\beta|} \hat{F}_g[z^\beta].
		\end{equs}
\end{proof}

\section{Connection with local affine equivariant methods}
\label{sec::affine_equivariant_methods}	
In the context of multi-indices, one considers  $ \mathfrak X = \mathcal{C}^{\infty}(\mathbb{R}, \mathbb{R})$. A numerical method $ \varphi $  is then a map that belongs to $\mathcal{C}^{\infty}(\mathfrak X, \mathfrak X)$.
	
\begin{definition} 
\label{def_affine_equivariant} 
A numerical method $ \varphi $ is said  to be \textit{local} if for every $ \sigma \in \mathfrak X $
\begin{equs}
	\text{supp} ( \varphi(\sigma) ) \subset \text{supp}(\sigma).
\end{equs}
It is \textit{affine equivariant} if for every $ a, b \in \mathbb{R}^* $ and $f \in \mathfrak X$, one has
\begin{equs}
	\varphi(a f + b)  =a \varphi(f) + b.
\end{equs}
\end{definition}

It has been shown the following nice characterisation on $B$-series in \cite[Cor.  8.4]{MV16}

\begin{theorem} 
\label{local_affine_equivariant}
If a map from $\mathcal{C}^{\infty}(\mathfrak X, \mathfrak X)$ is local and affine equivariant, then its Taylor development is a $B$-series.
\end{theorem}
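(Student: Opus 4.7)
The plan is to recover the $B$-series expansion from the two hypotheses by first writing a formal Taylor expansion in the jets of $f$, and then using the equivariance to collapse the sum onto rooted-tree terms. By locality, $\varphi(f)(y)$ depends only on the germ of $f$ at $y$, and hence only on the jet $(f(y), f'(y), f''(y), \ldots)$. Combining this with the smoothness of $\varphi$, I would formally expand
\begin{equation*}
\varphi(f)(y) = \sum_\beta c_\beta(y) \prod_{k \in \mathbb{N}} \bigl(f^{(k)}(y)\bigr)^{\beta(k)},
\end{equation*}
the sum ranging over all finitely-supported multi-indices $\beta$. Morally this is a Peetre-type statement: a local smooth functional of $f$ at $y$ is a formal power series in the Taylor coefficients of $f$ at $y$.

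The next step is to use affine equivariance to select which $\beta$ can appear. I would read the equivariance geometrically, i.e., as commutation with pushforwards of vector fields under affine changes of base variable $y \mapsto \lambda y + \mu$, for which $f^{(k)}(y) \mapsto \lambda^{1-k} f^{(k)}\bigl(\lambda^{-1}(y-\mu)\bigr)$. Pure translation ($\lambda = 1$) then forces the coefficients $c_\beta(y)$ to be $y$-independent; pure dilation by $\lambda$ sends the monomial $\prod_k (f^{(k)})^{\beta(k)}$ to $\lambda^{[\beta]}$ times itself, with $[\beta] = \sum_k (1-k)\beta(k)$, while $\varphi(f)(y)$ must itself scale by $\lambda$ as a component of a vector field. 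Matching weights gives $c_\beta = 0$ unless $[\beta] = 1$, i.e., unless $z^\beta \in \mathbf{M}_0$ is populated.

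After these reductions the expansion collapses to
\begin{equation*}
\varphi(f)(y) = \sum_{z^\beta \in \mathbf{M}_0} c_\beta \, F_f[z^\beta](y),
\end{equation*}
and by the surjectivity of $\Psi \colon \mathbf{T} \to \mathbf{M}$ established around Theorem~\ref{corolla_decomposition}, every populated $z^\beta$ arises as $\Psi(\tau)$ for at least one rooted tree $\tau$. Regrouping coefficients along the fibres of $\Psi$ then rewrites the series as a rooted-tree indexed $B$-series, which is the required statement.

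The genuine obstacle I anticipate lies in the second step. Definition~\ref{def_affine_equivariant} as stated is the pointwise algebraic identity $\varphi(af+b) = a\varphi(f) + b$, whereas what actually isolates the populated condition is the geometric pushforward of vector fields under base-space affine maps, in the spirit of \cite{MV16}. Showing that these two viewpoints have identical consequences on Taylor coefficients, or interpreting the stated definition broadly enough to include base-space equivariance, is the analytically delicate point. The subsequent combinatorial rearrangement is then essentially bookkeeping, and the non-uniqueness of the rooted-tree representative is exactly what the paper's Theorem~\ref{affine_equivariant_multi_indices_intro} removes by keeping the $c_\beta$ indexed by populated multi-indices rather than by trees.
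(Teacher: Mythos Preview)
The paper does not prove this statement: it is quoted as \cite[Cor.~8.4]{MV16}, a specialisation to dimension one of the aromatic $B$-series characterisation \cite[Thm.~8.3]{MV16}. There is thus no in-paper argument to compare your sketch against.

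Your outline is essentially the \cite{MV16} strategy. Locality plus smoothness gives, via a Peetre-type argument, a formal expansion in jet monomials $\prod_k (f^{(k)})^{\beta(k)}$; equivariance under base-space dilations then forces the homogeneity $[\beta]=1$ exactly as you compute. The final step, lifting populated multi-indices to rooted trees via the surjection $\Psi$, is how the present paper would rephrase the passage from the one-dimensional aromatic series to an ordinary $B$-series; \cite{MV16} instead observes directly (recorded here in Section~\ref{sec::affine_equivariant_methods} and Proposition~\ref{composition_map_populated}) that all aromatic trees with the same composition map collapse to the same elementary differential in dimension one.

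The obstacle you raise is genuine, not cosmetic. Definition~\ref{def_affine_equivariant} as written, $\varphi(af+b)=a\varphi(f)+b$, is an affine action on the \emph{values} of $f$, whereas the dilation-weight argument you need --- and the argument in \cite{MV16} --- uses equivariance under pushforward by affine maps of the base, $f\mapsto \lambda\, f\!\circ\! A^{-1}$. These are not the same condition: the target-translation identity $\varphi(f+b)=\varphi(f)+b$, for instance, says nothing about $y$-independence of the coefficients $c_\beta$, and the target-scaling $\varphi(af)=a\varphi(f)$ does not produce the factor $\lambda^{[\beta]}$. So your step~2 does not follow from the definition as literally stated in the paper; it requires the \cite{MV16} notion of equivariance, which is presumably what is intended here.
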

In fact, this statement is a special result of a general statement for  $  \mathcal{C}^{\infty}(\mathbb{R}^d, \mathbb{R}^d)$ with $d \geq  1$ where now, one has to replace Butcher series by aromatic Butcher series (see \cite[Thm.~8.3]{MV16}). 
Aromatic Butcher series are an extension of Butcher series that contain trees with a loop at the root that represents divergences of vector fields in the elementary differentials. For instance,  one cannot encode the elementary differential $f^jf^k\partial_{ij}f^i$ with $i \ne k$ through rooted trees as, in a rooted tree $\Forest{[i[i][j]]}$, $\partial_{ij}$ is associated with $f^if^j$. Here, $\partial_{ij}$ is a shorthand notation for the coordinate derivatives $\partial_{x_i}\partial_{x_j}$. The element $f^jf^k$ is associated with $\partial_{jk}$ in $\Forest{[i[j][k]]}$. However, we can represent the desired elementary differential through the aromatic tree
	\begin{equs}
	F_f [\begin{tikzpicture}
		\draw[line width=0.3mm] (0,0) ellipse (0.2 and 0.15);  
		\fill (0.05,0.4) circle(0.07) node[right, font = \tiny] {j}; 
		\fill (0.05,0.15) circle(0.07)  node[right, font = \tiny] {i}; 
		\draw[line width=0.35mm] (0.05,0.15) -- (0.05,0.4);
		\fill (0.5,0) circle(0.07) node[ right, font = \tiny] {k}; 
	\end{tikzpicture}] = f^jf^k\partial_{ij}f^i.
\end{equs}
In \cite{MMMV16}, the authors have used a stronger assumption on the numerical method for getting a characterisation of Butcher series.  Nevertheless in the one-dimensional case, as there is no partial derivative, we can show that the elementary differentials encoded by aromatic trees are factorised by elementary differentials of multi-indice $B$-series to characterise Butcher series.

In \cite[Def.~5.3]{MV16}, they define the notion of composition map $\kappa  :  \mathbb{N} \rightarrow \mathbb{N}$ with finite support
\begin{equs}
	\text{card} \left\lbrace  j \in  \mathbb{N} | \kappa(j)  \neq 0 \right\rbrace < + \infty. 
\end{equs}
They also define its size by $ |\kappa| $ by
\begin{equs}
	|\kappa| : = \sum_{j \in \mathbb{N}} 
	\kappa(j).
\end{equs}
One obtains easily the following correspondence

\begin{proposition} 
\label{composition_maps}
Composition maps defined in \cite[Def.~5.3]{MV16} with finite support are exactly the multi-indices introduced in \cite{OSSW,LOT}.
\end{proposition}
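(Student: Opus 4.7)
The plan is to observe that this proposition is essentially a definitional identification, so the proof reduces to a direct comparison of the two definitions. First I would recall both notions side by side: a multi-indice in the sense of \cite{OSSW,LOT} is a map $\beta : \mathbb{N} \to \mathbb{N}$ with finite support, i.e.\ $|\{i \in \mathbb{N} : \beta(i) \neq 0\}| < \infty$, and a composition map in the sense of \cite[Def.~5.3]{MV16} is, by the definition already recalled just above the statement, a map $\kappa : \mathbb{N} \to \mathbb{N}$ satisfying $\operatorname{card}\{j \in \mathbb{N} : \kappa(j) \neq 0\} < +\infty$. These two conditions are literally identical, so the underlying sets coincide.

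Next I would make the identification explicit by taking the bijection to be the identity map $\kappa \longleftrightarrow \beta$, i.e.\ reading the value $\kappa(k)$ as the exponent $\beta(k)$ of the variable $z_k$ in the monomial $z^{\beta}$. Under this identification, the monomial representation $z^{\beta} = \prod_{k \in \mathbb{N}} z_k^{\beta(k)}$ recovers the information carried by $\kappa$ via the tuple $(\kappa(k))_{k \in \mathbb{N}}$, and finiteness of the support on either side guarantees that $z^{\beta}$ is a well-defined monomial in finitely many variables.

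Finally I would check that the elementary numerical invariants attached to each side agree under the identification: the size $|\kappa| = \sum_{j \in \mathbb{N}} \kappa(j)$ of a composition map in \cite{MV16} coincides with the length $|\beta| = \sum_{k \in \mathbb{N}} \beta(k)$ introduced in Section~\ref{sec:multiind}, and the ``arity'' book-keeping (the variable $z_k$ tracks nodes of arity $k$) matches the role the index $j$ plays in a composition map. Since both the defining data and the associated grading coincide, the identity map is the required bijection and the proposition follows. There is no real obstacle here; the only care needed is to make clear that we are not yet comparing the more refined populated subclass $\mathbf{M}_0$ to any substructure on the $\kappa$ side, but merely matching the ambient sets of finitely-supported maps $\mathbb{N} \to \mathbb{N}$.
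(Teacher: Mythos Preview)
Your proposal is correct and matches the paper's treatment: the paper presents this proposition without proof, prefacing it only with ``One obtains easily the following correspondence,'' so your explicit unpacking of the two definitions and the identity bijection is exactly the intended (trivial) argument. Your closing caveat about not yet restricting to the populated subclass $\mathbf{M}_0$ is apt, since that refinement only appears in the subsequent Proposition~\ref{composition_map_populated}.
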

According to \cite[Sec.~2.6]{MV16}, an aromatic forest is a (directed, finite) graph with at most one outgoing edge from each node. An aromatic tree is an aromatic forest with $n$ nodes and $n -1$ arrows. 
For any aromatic forest, one can associate  a composition map $\kappa$ with finite support by setting $ \kappa(j) $ to be the number of nodes having $j$ incoming edges. 
 
 \begin{proposition} 
 \label{composition_map_populated}
 The composition map $ \kappa $ associated to an aromatic tree is a populated multi-indice.
 \end{proposition}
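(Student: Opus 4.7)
The plan is to prove this by a simple double-counting argument, using the definition of populated multi-indice from \eqref{populated_1}, namely $[\beta] = |\beta| - \sum_{j \in \mathbb{N}} j\beta(j) = 1$, and matching each term to a graph-theoretic quantity on the aromatic tree.

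First, I would identify $|\kappa|$ with the total number of nodes. Since $\kappa(j)$ counts the nodes with exactly $j$ incoming edges, partitioning the nodes by their in-degree gives $|\kappa| = \sum_{j \in \mathbb{N}} \kappa(j) = n$, where $n$ is the number of nodes of the aromatic tree.

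Next, I would identify $\sum_{j \in \mathbb{N}} j \kappa(j)$ with the total number of edges. Indeed, $\sum_{j} j \kappa(j)$ is precisely the sum of in-degrees over all nodes, which counts each directed edge exactly once (via its target). Hence this sum equals the number of edges, which by definition of an aromatic tree (an aromatic forest with $n$ nodes and $n-1$ arrows) equals $n-1$.

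Combining these two identities yields
\begin{equation*}
    [\kappa] = |\kappa| - \sum_{j \in \mathbb{N}} j \kappa(j) = n - (n-1) = 1,
\end{equation*}
so $\kappa$ satisfies the population condition \eqref{populated_1} and is therefore a populated multi-indice. The only subtle point — and it is really just a matter of bookkeeping — is the convention that each edge contributes to the in-degree of exactly one node, which is guaranteed by the fact that aromatic forests are directed graphs with at most one outgoing edge per node, so there is no ambiguity in orienting edges and counting them as incoming to their unique target.
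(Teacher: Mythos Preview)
Your proof is correct and follows essentially the same approach as the paper: both identify $\sum_j \kappa(j)$ with the number of nodes $n$ and $\sum_j j\kappa(j)$ with the number of arrows $n-1$, then compute $[\kappa]=n-(n-1)=1$. Your version is slightly more explicit about why $\sum_j j\kappa(j)$ counts the edges (via summing in-degrees), but the argument is the same.
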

 
\begin{proof}
An aromatic tree has  $n$ nodes and $n -1$ arrows  which means its associated multi-indice satisfies the population condition \eqref{populated_1} . One has  
\begin{equation*}
 	[\kappa] = \sum_{j \in \mathbb{N}} (1-k) \kappa(j)
 	= \sum_{j \in \mathbb{N}}  \kappa(j)
 	 - \sum_{j \in \mathbb{N}} j \kappa(j)
 	 = n - (n-1) = 1,
\end{equation*}
where have used the fact that $ \sum_{j \in \mathbb{N}}  \kappa(j) $ corresponds to the number of nodes and $ \sum_{j \in \mathbb{N}} j \kappa(j) $ to the number of arrows.
\end{proof}
 \begin{remark}
 	After modifying the condition ``{\it we cannot  choose $ z_{l_1}, \cdots,  z_{l_k}$ to be all $ z_0 $ as long as the number of variables in $z^{\beta'}$ is larger than the number of nodes to be attached.} " in the Algorithm \ref{algorithm:build_tree} or allowing $z_0$ to be the root for $z^\beta \ne z_0$ in the first step of the algorithm, one can build aromatic trees. For example, let us revisit Example \ref{ex:build_tree}. If we allow to use $z_0$ to decorate all incoming edges of the root before using up all the variables left in $z^{\beta'}$ (i.e., finally $z_1$ is left not attached to the rooted tree), we obtain
 	\begin{equs}
 		\begin{tikzpicture}[scale=0.2,baseline=-5]
 			\coordinate (root) at (0,-1);
 			\coordinate (right) at (1,2);
 			\coordinate (left) at (-1,2);
 			\draw[symbols] (root) -- (right);
 			\draw[symbols] (root) -- (left);
 			\node[var] (rootnode) at (left) {\tiny{$ z_0 $}};
 			\node[var] (rootnode) at (right) {\tiny{$ z_0 $}};
 			\node[var] (rootnode) at (root) {\tiny{$  z_{2} $}};
 		\end{tikzpicture}, \quad  z_1
 	\end{equs}
 	which is exactly the aromatic tree
 	\begin{equs}
 		\begin{tikzpicture}
 			\fill (0,0) circle(0.07); 
 			\fill (-0.2,0.4) circle(0.07); 
 			\fill (0.2,0.4) circle(0.07); 
 			\draw[line width=0.3mm] (0,0) --  (-0.2,0.4);
 			\draw[line width=0.3mm] (0,0) --  (0.2,0.4);
 			\draw[line width=0.3mm] (0.6,-0.16) ellipse (0.2 and 0.15);  
 			\fill (0.65,0) circle(0.07); 
 		\end{tikzpicture} .
 	\end{equs}
 \end{remark}

By fixing a populated multi-indice $\kappa$, we denote by $ \Gamma_{\kappa} $ the aromatic trees whose composition map is equal to $ \kappa $. It has been noticed in \cite[Sec.~7.6]{MV16}, for $ \tau \in \Gamma_{\kappa} $ and one-dimensional $f$
\begin{equs}
	F_f[\tau] = \prod_{j \in \mathbb{N}}
	(f^{(j)})^{\kappa(j)} = F_f[z^{\kappa}].
\end{equs}
For example,
	\begin{equs}
		F_f [\begin{tikzpicture}
			\draw[line width=0.3mm] (0,0) ellipse (0.2 and 0.15);  
			\fill (0.05,0.4) circle(0.07); 
			\fill (0.05,0.15) circle(0.07); 
			\draw[line width=0.35mm] (0.05,0.15) -- (0.05,0.4);
			\fill (0.5,0) circle(0.07); 
		\end{tikzpicture}] = F_f[\Forest{[[][]]}  ] = f^2f^{(2)} = F_f[z_0^2 z_2],
	\end{equs}
		\begin{equs}
		F_f [\begin{tikzpicture}
			\fill (0,0) circle(0.07); 
			\fill (-0.2,0.4) circle(0.07); 
			\fill (0.2,0.4) circle(0.07); 
			\draw[line width=0.3mm] (0,0) --  (-0.2,0.4);
			\draw[line width=0.3mm] (0,0) --  (0.2,0.4);
			\draw[line width=0.3mm] (0.45,-0.16) ellipse (0.2 and 0.15);  
			\fill (0.5,0) circle(0.07); 
		\end{tikzpicture}] 
		=
		F_f[
		\begin{forest}
			[[[]][]]
		\end{forest}
		]
		=
		F_f[
		\begin{forest}
			[[[][]]]
		\end{forest}
		]
		= f^2f^{(1)}f^{(2)}
		= F_f[z_0^2 z_1z_2]
		.
	\end{equs}
The image of $ F_f $ has always dimension one, i.e., $\text{dim}(F_f \left[\left\langle  \Gamma_{\kappa}\right\rangle\right] ) = 1 $ where $\left\langle  \Gamma_{\kappa}\right\rangle$ is the free vector space of aromatic trees. In other words, the elementary differentials of the aromatic trees of a given composition $ \kappa $ all collapse to the same elementary differential. We can  make Theorem~\ref{local_affine_equivariant} and therefore the statement in \cite{MV16} more precise via Theorem~\ref{affine_equivariant_multi_indices_intro}. We recall to the reader its statement below
\begin{quote}
{\it{If a map from $\mathcal{C}^{\infty}(\mathfrak X, \mathfrak X)$ is local and affine equivariant, then its Taylor development is a multi-indices $B$-series. The choice of the multi-indice $B$-series is unique}.}
\end{quote}
\begin{proof}[of Theorem \ref{affine_equivariant_multi_indices_intro}] \label{Proof_Theorem_1_5} The only thing that we need to check is the uniqueness of the multi-indice B-series.
As the elementary differentials encoded by aromatic trees are factorised by those encoded by multi-indices,	it boils down to prove that multi-indices elementary differentials are linear independent. This means that one has
	\begin{equs}
\forall f \in \mathfrak X, \; \;	 \sum_{z^\beta \in \mathbf{M}} \lambda_{z^\beta} F_f[z^\beta] = 0 \Leftrightarrow \lambda_{z^\beta} =   0
	 \end{equs}
	where the sum runs over all populated multi-indices $ z^{\beta} $,  and $ \lambda_{\cdot} $ has a finite support.
We can therefore choose a specific $f$ for discriminating the various $ z^{\beta} $.
Let $ m $ be the highest integer such that  there exists $ z^{\beta} $ with $  \lambda_{z^\beta} \neq 0 $ and $ \beta(m) \neq 0$. We define
\begin{equs}
f(x) :=	f_{t_0,...,t_m}(x) = \sum_{k=0}^m t_k \frac{x^k}{k!}
\end{equs} 
where $ t_0 = 1$ since for a populated multi-indice if $\beta(k)$ is fixed for every $k \ne 0$ the $\beta(0)$ is also fixed. We have chosen a function $f$ parametrised by $t_1,...,t_m$. Then, one has
\begin{equs}
	F_f[z^\beta](0) =  \prod_{k=0}^m
	(t_k)^{\beta(k)}
\end{equs}
One can observe that we obtain a monomial in $  t_1,...,t_m$ which is uniquely associated to a multi-indice $z^\beta$.
This family of monomials is clearly a free basis which allows us to conclude. 
\end{proof}



\begin{thebibliography}{99}


\bibitem{BB21}
I.~Bailleul, Y.~{Bruned}.
\newblock { \em Renormalised singular stochastic PDEs}. 
\burlalt{arXiv:2101.11949}{http://arxiv.org/abs/2101.11949}. 




\bibitem{BCCH}
Y.~{Bruned}, A.~{Chandra}, I.~{Chevyrev}, M.~{Hairer}.
\newblock \emph{Renormalising SPDEs in regularity structures}.
\newblock J. Eur. Math. Soc. (JEMS) \textbf{23}, no.~3, (2021),
  869--947.
\newblock \burlalt{doi:10.4171/JEMS/1025}{http://dx.doi.org/10.4171/JEMS/1025}.

\bibitem{BD23}
Y.~Bruned, V. Dotsenko.
\newblock {\textsl{Novikov algebras and multi-indices in regularity structures.}}
\newblock \burlalt{arXiv:2311.09091  }{http://arxiv.org/abs/2311.09091 }.





\bibitem{BaiHos}
I.~{Bailleul}, M.~{Hoshino}.
\newblock \emph{A tourist's guide to regularity structures.}
\newblock \burlalt{arXiv:2006.03524}{http://arxiv.org/abs/2006.03524}.

\bibitem{BHZ}
Y.~Bruned, M.~Hairer, L.~Zambotti.
\newblock \emph{Algebraic renormalisation of regularity structures}.
\newblock Invent. Math. \textbf{215}, no.~3, (2019), 1039--1156.
\newblock
  \burlalt{doi:10.1007/s00222-018-0841-x}{http://dx.doi.org/10.1007/s00222-018-0841-x}.

\bibitem{BK23}
Y.~Bruned, F.~Katsetsiadis.
\newblock {\em Post-Lie algebras in Regularity Structures}.  Forum of Mathematics, Sigma \textbf{11}, e98, (2023), 1--20.
\newblock 
\burlalt{doi:10.1017/fms.2023.93}{http://dx.doi.org/10.1017/fms.2023.93}. 

\bibitem{BL23}
Y.~Bruned, P.~Linares.
\newblock {\textsl{A top-down approach to algebraic renormalization in regularity structures based on multi-indices.}} Arch. Ration. Mech. Anal. \textbf{248}, 111 (2024). 
\burlalt{doi:10.1007/s00205-024-02041-4}{http://dx.doi.org/10.1007/s00205-024-02041-4}. 


\bibitem{BM22}
Y.~Bruned, D.~Manchon.
\newblock {\em Algebraic deformation for (S)PDEs}. J. Math. Soc. Japan. \textbf{75}, no.~2, (2023), 485--526.
\newblock 
\burlalt{doi:10.2969/jmsj/88028802}{http://dx.doi.org/10.2969/jmsj/88028802}. 

\bibitem{B23}
Y.~Bruned.
\newblock {\textsl{Composition and substitution of Regularity Structures $B$-series.}}
\newblock \burlalt{arXiv:2310.14242 }{http://arxiv.org/abs/2310.14242}.


\bibitem{BS}
Y.~{Bruned}, K.~{Schratz}.
\newblock \emph{Resonance based schemes for dispersive equations via decorated
  trees}.
\newblock 
Forum of Mathematics, Pi,  \textbf{10},
  e2, (2022), 1--76.
  \burlalt{doi:10.1017/fmp.2021.13}{http://dx.doi.org/10.1017/fmp.2021.13}. 

\bibitem{Butcher72}
{ \rm J. C. Butcher}.
\newblock {\em An algebraic theory of integration methods.}
\newblock Math. Comp. \textbf{26}, (1972), 79--106.
\newblock \burlalt{doi:10.2307/2004720}{http://dx.doi.org/10.2307/2004720}.


\bibitem{Butcher21}
J. C. Butcher.
\newblock \emph{$B$-series - Algebraic Analysis of Numerical Methods.}
\newblock Springer Berlin, Heidelberg, 2021.
\newblock \burlalt{doi:10.1007/978-3-030-70956-3}{https://doi.org/10.1007/978-3-030-70956-3}.


\bibitem{CA}
D.~{Calaque}, K.~{E{brahimi-Fard}}, D.~{Manchon}.
\newblock \emph{Two interacting {H}opf algebras of trees: a {H}opf-algebraic approach to composition and substitution of {B}-series}.
\newblock Adv. in Appl. Math. \textbf{47}, no.~2, (2011), 282--308.
\newblock
  \burlalt{doi:10.1016/j.aam.2009.08.003}{http://dx.doi.org/10.1016/j.aam.2009.08.003}.


\bibitem{cartierpatras2021}
P.~Cartier, F.~Patras.
\newblock \emph{Classical Hopf algebras and their Applications}.
\newblock Springer Berlin, Heidelberg, 2021.

\bibitem{Cayley1857}
A.~Cayley.
\newblock \emph{On the theory of analytic forms called trees}.
\newblock Philosophical Magazine XIII, (1857), 172--176.

\bibitem{ajay}
A.~Chandra, M.~Hairer.
\newblock \emph{An analytic {BPHZ} theorem for regularity structures}.
\newblock \burlalt{arXiv:1612.08138}{http://arxiv.org/abs/1612.08138}.

\bibitem{ChapLiv2001}
F.~Chapoton, M.~Livernet.
\newblock \emph{Pre-Lie algebras and the rooted trees operad}. 
\newblock International Mathematics Research Notices {\bf{8}}, (2001), 395--408.

 \bibitem{CHV05}
P.~{Chartier}, E.~{Hairer},  G.~{Vilmart}.
\newblock {\em A substitution law for $B$-series vector
	fields.}
\newblock INRIA Report, no. 5498, (2005).
\newblock
\burlalt{inria-00070509}{https://inria.hal.science/inria-00070509}.








\bibitem{CHV}
P.~{Chartier}, E.~{Hairer}, G.~{Vilmart}.
\newblock \emph{Numerical integrators based on modified differential equations}.
\newblock Math. Comp. \textbf{76}, no.~260, (2007), 1941--1953.
\newblock
\burlalt{doi:10.1090/S0025-5718-07-01967-9}{http://dx.doi.org/10.1090/S0025-5718-07-01967-9}.

\bibitem{MR2657947}
P.~{Chartier}, E.~{Hairer}, G.~{Vilmart}.
\newblock \emph{Algebraic structures of {B}-series}.
\newblock Found. Comput. Math. \textbf{10}, no.~4, (2010), 407--427.
\newblock
  \burlalt{doi:10.1007/s10208-010-9065-1}{http://dx.doi.org/10.1007/s10208-010-9065-1}.



\bibitem{CK}
A.~Connes, D.~Kreimer.
\newblock \emph{Hopf algebras, renormalization and noncommutative geometry}.
\newblock Commun. Math. Phys. \textbf{199}, no.~1, (1998), 203--242.
\newblock
  \burlalt{doi:10.1007/s002200050499}{http://dx.doi.org/10.1007/s002200050499}.

\bibitem{CKI}
A.~Connes, D.~Kreimer.
\newblock \emph{Renormalization in quantum field theory and the {R}iemann-{H}ilbert
  problem {I}: the {H}opf algebra structure of graphs and the main theorem}.
\newblock Commun. Math. Phys. \textbf{210}, (2000), 249--73.
\newblock
  \burlalt{doi:10.1007/s002200050779}{http://dx.doi.org/10.1007/s002200050779}.


\bibitem{DL2002}
A.~Dzhumadildaev, C.~L\"ofwall.
\newblock \emph{Trees, free right-symmetric algebras, free Novikov algebras and identities}.
Homology, Homotopy and Applications {\bf{4}}, no.~2, (2002), 165--190.


\bibitem{FrizHai}
P.~K. {Friz}, M.~{Hairer}.
\newblock \emph{{A Course on Rough Paths}}.
\newblock Springer International Publishing, 2020.
\newblock
  \burlalt{doi:10.1007/978-3-030-41556-3}{https://dx.doi.org/10.1007/978-3-030-41556-3}.
  
  \bibitem{GD}
  D. Guin,  J. M. Oudom, \emph{On the {L}ie enveloping algebra of a pre-{L}ie algebra}, J.
  K-Theory \textbf{2} (2008), no.~1, 147--167.
  \burlalt{doi:10.1017/is008001011jkt037}{https://doi.org/10.1017/is008001011jkt037}.
  
  \bibitem{Guin1}
  D. Guin,  J. M. Oudom, \emph{Sur l'alg\`ebre enveloppante d'une
  	alg\`ebre pr\'{e}-{L}ie}, C. R. Math. Acad. Sci. Paris \textbf{340} (2005),
  no.~5, 331--336.
  \burlalt{doi:10.1016/j.crma.2005.01.010}{https://doi.org/10.1016/j.crma.2005.01.010}. 
  

  
  \bibitem{GT}
  R. S. Gvalani, M.~Tempelmayr.
  \newblock {\em Stochastic estimates for the thin-film equation with thermal noise}. 
  \newblock \burlalt{arXiv:2309.15829
  }{https://arxiv.org/abs/2309.15829}.

\bibitem{HLW2006}
E.~Hairer  , C.~Lubich, G.~Wanner.
\newblock \emph{Geometric Numerical Integration -- Structure-Preserving Algorithms for Ordinary Differential Equations}
\newblock Springer Berlin, Heidelberg, 2006.
\newblock\burlalt{doi: 10.1007/3-540-30666-8}{https://doi.org/10.1007/3-540-30666-8}

\bibitem{reg}
M.~Hairer.
\newblock \emph{A theory of regularity structures}.
\newblock Invent. Math. \textbf{198}, no.~2, (2014), 269--504.
\newblock
  \burlalt{doi:10.1007/s00222-014-0505-4}{http://dx.doi.org/10.1007/s00222-014-0505-4}.


\bibitem{JZ}
J.-D. Jacques, L. Zambotti.
\newblock {\em Post-Lie algebras of derivations and regularity structures}. 
\newblock \burlalt{arXiv:2306.02484
}{https://arxiv.org/abs/2306.02484}.

\bibitem{LOT}
P.~Linares, F.~Otto, M.~Tempelmayr.
\newblock {\em The structure group for quasi-linear equations via universal enveloping algebras}. Comm. Amer. Math.  \textbf{3}, (2023), 1--64.  
\burlalt{doi:10.1090/cams/16}{https://dx.doi.org/10.1090/cams/16}.



\bibitem{Li23}
P.~Linares.
\newblock {\em Insertion pre-Lie products and translation of rough paths based on multi-indices}. 
\newblock \burlalt{arXiv:2307.06769
}{https://arxiv.org/abs/2307.06769}.




\bibitem{LO23}
P.~Linares F.~Otto.
\newblock {\em A tree-free approach to regularity structures: The regular case for quasi-linear equations}. 
\newblock \burlalt{arXiv:2207.10627 
}{https://arxiv.org/abs/2207.10627 }.


\bibitem{LOTT}
{\rm P.~Linares, F.~Otto, M.~Tempelmayr, P.~Tsatsoulis}
\newblock {\em A diagram-free approach to the stochastic estimates in regularity structures.} Invent. Math. \textbf{237}, (2024), 1469--1565.
\burlalt{doi:10.1007/s00222-024-01275-z}{https://dx.doi.org/10.1007/s00222-024-01275-z}.




\bibitem{manchon2011}
Dominique~Manchon.
\newblock \emph{A short survey on pre-Lie algebras}.
\newblock E.~Schr\"odinger Institute Lectures in Mathematical Physics, European Mathematical Society, A.~Carey Ed., 2011.


\bibitem{MMMV16}
R.~I. McLachlan, K.~Modin,
H. Munthe-Kaas, O.~Verdier.
\newblock \emph{$B$-series methods are exactly the affine equivariant methods}.
\newblock Numer. Math. \textbf{133}, (2016), 599--622.
\newblock
\burlalt{doi:10.1007/s00211-015-0753-2}{http://dx.doi.org/10.1007/s00211-015-0753-2}.

\bibitem{MV16}
H. Munthe-Kaas, O.~Verdier.
\newblock \emph{Aromatic Butcher Series}.
\newblock Found. Comput. Math. \textbf{16}, (2016), 183--215.
\newblock
\burlalt{doi:10.1007/s10208-015-9245-0}{http://dx.doi.org/10.1007/s10208-015-9245-0}.

\bibitem{MMMV17}
R.~I. McLachlan, K.~Modin,
H. Munthe-Kaas, O.~Verdier.
\newblock \emph{Butcher series: A story of rooted trees and numerical methods for evolution equations}.
\newblock Asia Pacific Mathematics Newsletter  {\bf{7}}(1), (2017), 1--11.





\bibitem{OSSW}
F.~Otto, J.~Sauer, S.~Smith, H.~Weber.
\newblock {\em A priori bounds for quasi-linear SPDEs in the full sub-critical regime}. 
\newblock \burlalt{arXiv:2103.11039
}{https://arxiv.org/abs/2103.11039}.

\bibitem{OST}
F.~Otto, K.~Seong, M.~Tempelmayr.
\newblock {\em Lecture notes on tree-free regularity structures}. 
\newblock {Mat. Contemp. \textbf{58}, (2023), 150--196.}
\newblock \burlalt{doi:10.21711/231766362023/rmc584}{http://dx.doi.org/10.21711/231766362023/rmc584}.


	\bibitem{T}
M.~Tempelmayr.
\newblock {\textsl{Characterizing models in regularity structures: a quasilinear case.}}
Probab. Theory Relat. Fields, (2024), 1--57.
\newblock \burlalt{doi:10.1007/s00440-024-01292-2}{http://dx.doi.org/10.1007/s00440-024-01292-2}.






\end{thebibliography}
\end{document}